\numberwithin{equation}{section}
\newtheorem{theorem}{Theorem}[section]
\newtheorem*{theorem*}{Theorem}
\newtheorem{corollary}[theorem]{Corollary}
\newtheorem{lemma}[theorem]{Lemma}
\newtheorem*{lemma*}{Lemma}
\newtheorem{proposition}[theorem]{Proposition}
\theoremstyle{definition}
\newtheorem{definition}[theorem]{Definition}
\newtheorem{remark}[theorem]{Remark}
\begin{document}

\title{Parabolic free boundary phase transition and mean curvature flow}

\keywords{}

\subjclass[2020]{35R35, 35N25, 53E10}

\author{Jingeon An}

\address{Department of Mathematics and Computer Science, University of Basel, Spiegelgasse 1, 4052 Basel, Switzerland}

\email{jingeon.an@icloud.com}

\author{Kiichi Tashiro}

\address{Department of Mathematics, Institute of Science Tokyo, 2-12-1, Ookayama, Meguro-ku, Tokyo, 152-8551, Japan}

\email{tashiro.k.0e2f@m.isct.ac.jp}

\keywords{Allen--Cahn equation, Free boundary, Mean curvature flow}

\begin{abstract}
    It is known that there is a strong relation between the parabolic Allen--Cahn equation and the mean curvature flow, in the sense that the parabolic Allen--Cahn equation can be considered as a ``diffused" mean curvature flow. In this work, we derive a forced mean curvature flow 
    \[
        v=-H-\partial_\nu\log |\nabla u|+f(u)/|\nabla u|,
    \]
    satisfied by level surfaces of any solution to the nonlinear parabolic equation 
    \[
    \partial_tu=\Delta u-f(u).
    \]
    Moreover, we introduce the notion of the inner gradient flow, and unify parabolic free boundary problems in the gradient flow framework. Finally, we consider the parabolic free boundary Allen--Cahn equation
    \[
        \left\{
        \begin{alignedat}{2}
            \partial_tu&=\Delta u\quad&&\text{in}\quad\{|u|<1\}\\
            |\nabla u|&=1/\epsilon\quad&&\text{on}\quad\partial\{|u|<1\},
        \end{alignedat}
        \right.
    \]
    and confirm that under reasonable assumptions, the $C^{\alpha}$ norm of the forcing term $\partial_\nu\log|\nabla u|$ converges to zero at an algebraic rate as $\epsilon\rightarrow 0$, uniformly in time. This implies that the parabolic free boundary Allen--Cahn equation converges to the mean curvature flow, uniformly (in $\epsilon$ and in time) in the $C^{2,\alpha}$ sense.
\end{abstract}

\maketitle

\tableofcontents

\section{Introduction}\label{sec: Introduction}
The Allen--Cahn equation 
\[
    \Delta u=\frac{1}{\epsilon^2}(u^3-u)
\]
is derived as a critical point of the Ginzburg--Landau energy functional
\[
    J_\epsilon^{\textup{AC}}(v):=\int_\Omega \epsilon|\nabla v|^2+\frac{(1-v^2)^2}{\epsilon},
\]
and in particular it is first formulated in the context of the mathematical theory of phase transition \cite{allen1972ground,allen1973correction}. However, it is also of great interest from a mathematical perspective, as it can be interpreted as a ``diffused'' minimal surface, or as a minimal surface that lacks scaling invariance. 

Indeed, the Ginzburg--Landau energy functional $J_\epsilon^{\textup{AC}}$ converges to the area functional of the surface, in the limit $\epsilon\rightarrow 0$. Heuristically, the $\epsilon>0$ parameter in the definition of the functional governs the interface thickness, that is to say, the thickness of $\{|u|<1-\delta\}$ with some $\delta>0$. Then, in the limit $\epsilon\rightarrow 0$, each level surface of the Allen--Cahn equation converges to minimal surfaces, i.e., surfaces with zero mean curvature. 

The foundational convergence result was due to Modica \cite{modica1985gradient,modica1987gradient}, showing that the solutions to the Allen--Cahn equation $u$ converge (in $\epsilon\rightarrow 0$ limit) to $\chi_\Omega-\chi_{\Omega^c}$ in $L^1_\text{loc}$, where $\partial\Omega$ is a surface with zero mean curvature. Then Caffarelli and C\'{o}rdoba \cite{caffarelli1995uniform,caffarelli2006phase} proceeded to show that under the uniformly Lipschitz graph assumption, each level surface converges to a minimal surface in $C^{1,\alpha}$ sense. 

Advancing this convergence to $C^{2,\alpha}$ was rather more sophisticated, and led to the seminal works of Wang and Wei \cite{wang2019finite,wang2019second}, and Chodosh and Mantoulidis \cite{chodosh2020minimal}. They proved uniform $C^{2,\alpha}$ convergence in a more restrictive setting than that of Caffarelli and C\'{o}rdoba's $C^{1,\alpha}$ estimate, by adding stability/finite Morse index assumptions, and dimensional restrictions. This is due to the fact that distant interfaces are interacting with each other, complicating the analysis.

\subsection{Parabolic Allen--Cahn equation and mean curvature flow}
On the other hand, the parabolic version of the Allen--Cahn equation has also gained significant interest in the mathematical community, because it resembles the mean curvature flow, analogous to the relation between the elliptic Allen--Cahn equation and the minimal surface. For instance, by taking the $L^2$-gradient flow of the Ginzburg--Landau energy functional $J_\epsilon^{\textup{AC}}$, one gets the parabolic Allen--Cahn equation
\[
    (\partial_t-\Delta )u=-\frac{1}{\epsilon^2}(u^3-u),
\]
as one gets the mean curvature flow by taking the $L^2$-gradient flow of the area functional. 

Then similar convergence results to the mean curvature flow have been investigated. In the most general setting, Ilmanen \cite{ilmanen1993convergence} showed that the solutions of the parabolic Allen--Cahn equation converge to the mean curvature flow in the varifold sense, proposed by Brakke \cite{brakke1978motion}. His work suggests a strong connection between Brakke's mean curvature flow and the gradient flow of the Allen--Cahn energy, and indicates that one can establish $ C^{2,\alpha} $ convergence analogous to the elliptic case by exploring Brakke's regularity theorem. 

Indeed, Nguyen and Wang \cite{nguyen2020brakke,the2024second} established short-time (time interval depends on $\epsilon$) $ C^{2,\alpha} $ convergence under the low entropy assumption. In \cite{nguyen2020brakke}, they adopted the blow-up technique by Kasai and Tonegawa \cite{kasai2014general}, thereby showing the uniform Lipschitz regularity. In their parallel work \cite{the2024second}, they generalized the work of Wang--Wei \cite{wang2019finite,wang2019second} in the parabolic case, and showed the short-time $C^{2,\alpha}$ convergence to the mean curvature flow.

However, the result in \cite{the2024second} states the uniform $C^{2,\alpha}$ convergence to a mean curvature flow only in an $\epsilon$-short time interval, while it is generally expected to have $\epsilon$-independent time convergence, as long as the flow admits a reasonable regularity. The obstacle to generalizing the Wang--Wei's argument in the parabolic setting is that, the interface of interest is moving at the speed that may not be comparable to the parameter $\epsilon$. Then Wang--Wei's approximation technique based on the Fermi coordinates is only meaningful within this $\epsilon$-thin interface, and if this interface were moving at speed $1$, then after the time comparable to $\epsilon$, this interface (where the Wang--Wei approximation works well) escapes quickly outside the domain where you want to use parabolic estimates.

\subsection{Free boundary Allen--Cahn equation}
In the definition of the Ginzburg--Landau energy functional $J^\textup{AC}_\epsilon$, one can replace the potential $(1-u^2)^2$ with any double-well potential $W:[-1,1]\rightarrow\mathbb{R}$ vanishing at $\pm 1$ and strictly positive in $(-1,1)$. Prominent examples include the family \((W_\delta)_{0\le \delta\le 2}\), 
\[ 
W_\delta(v) :=
\begin{cases}
(1 - v^2)^\delta &\qquad\text{for}\quad 0 < \delta \le 2,\\
\chi_{(-1,1)}(v)& \qquad \text{for}\quad \delta = 0,
\end{cases}
\]
introduced by Caffarelli and C\'{o}rdoba \cite{caffarelli1995uniform}. Then the corresponding energy functional is denoted as
\begin{equation}\label{eq: AC type energy}
        J_\epsilon^\delta(v):=\int_\Omega \epsilon|\nabla v|^2+\frac{W_\delta(v)}{\epsilon}.
\end{equation}
The most interesting case is the opposite endpoint $\delta=0$, where the potential becomes the indicator $\chi_{(-1,1)}(v)$. Critical points of the corresponding energy $J_\epsilon^\textup{FBAC}:=J_\epsilon^0$ satisfy, in the viscosity sense, the free boundary problem (see \cite{de2009existence})
\[
    \left\{
        \begin{alignedat}{2}
            \Delta u&=0&\quad&\text{in}\quad\{|u|<1\}\\
            |\nabla u|&=1&\quad&\text{on}\quad\partial\{|u|<1\}.
        \end{alignedat}
    \right.
\]
We call this equation the \textit{free boundary Allen--Cahn} equation.\footnote{In the minimal surface community, sometimes the term, free boundary, is used to indicate the surface with the Neumann boundary condition on the boundary of the domain $\Omega$. The term free boundary in this work is used in the context of free boundary problems such as Bernoulli type free boundary problems.}

Analogous to the classical Allen--Cahn equation, the free boundary version can also be considered as a diffused minimal surface. In the previous work \cite{an2025varifold}, the authors provided a Hutchinson--Tonegawa type varifold convergence theory \cite{hutchinson2000convergence} for the free boundary Allen--Cahn equation. The uniform $C^{1,\alpha}$ convergence to the minimal surface was already investigated in Caffarelli-C\'ordoba \cite{caffarelli1995uniform,caffarelli2006phase}, and it was extended to the uniform $C^{2,\alpha}$ convergence in \cite{an2025second}.

Within this context, the free boundary Allen--Cahn equation has recently attracted considerable attention (see, e.g., \cite{du2022four,kamburov2013free,liu2018free,valdinoci2004plane,valdinoci2006flatness,wang2015structure}). The solution of the free boundary Allen--Cahn equation satisfies a two phase Bernoulli-type free boundary problem rather than a global semilinear PDE, thus interfaces separated by free boundaries do not interact, and behave much like distinct sheets of minimal surfaces, unlike solutions of the Allen--Cahn equation. In the classical Allen--Cahn equation, due to the presence of the sheet interaction, the study of uniform regularity requires technical tools such as the Toda system as developed by Wang--Wei \cite{wang2019finite} and Chodosh-Mantoulidis \cite{chodosh2020minimal}, where the finite Morse index or stability assumption and the dimensional restriction are necessary. 

In the free boundary counterpart, one can remove all these restrictions and perform the uniform regularity theory in full generality, as done in \cite{an2025second}. Moreover, the long-lasting De Giorgi conjecture was first resolved in the free boundary case \cite{CFFS} in $n=4$ (and $n=3$ for the stable De Giorgi conjecture), which demonstrates the desirability and potential of the free boundary Allen--Cahn equation.

\subsection{Parabolic free boundary Allen--Cahn equation}

The discussion above concerns the elliptic case and it is natural to ask to what extent the analogous phenomena persist for the corresponding parabolic flow. Let $\Omega\subset\mathbb{R}^n$ be a bounded domain with sufficient regularity of $\partial\Omega$, and define the parabolic free boundary Allen--Cahn equation
\begin{equation}\label{eq: parabolic FBAC}
    \left\{
        \begin{alignedat}{2}
            \partial_tu(\cdot,t)&=\Delta u(\cdot,t)\quad&&\text{in}\quad\{|u(\cdot,t)|<1\}\subset\Omega\\
            |\nabla u(\cdot,t)|&=1/\epsilon\quad&&\text{on}\quad\partial\{|u(\cdot,t)|<1\}.
        \end{alignedat}
    \right.
\end{equation}
Here $u(\cdot,t)\in C_{x,t}^{0,1}(\mathbb{R}^n)$. Throughout the paper, we often use the abbreviation $\{|u|<1\}=\{(x,t)\in\Omega\times[0,T):|u(x,t)|<1\}$.

In the case where $\chi_{(0,\infty)}$ is used in place of $\chi_{(-1,1)}$ in $J^{\textup{FBAC}}_\epsilon$, the corresponding parabolic one-phase Bernoulli problem has been studied as a mathematical model of heat propagation. In this context, existence, regularity and structures of the free boundary problem have been investigated; see \cite{caffarelli1995free,weiss2003singular,AnderssonWeiss+2009+213+235} and references therein. Note that these works were primarily concerned with heat propagation, and the study of \eqref{eq: parabolic FBAC} is important in the context of the diffused mean curvature flow and the gradient flow of $J^{\textup{FBAC}}_\epsilon$.

From the fact that the mean curvature flow and the parabolic Allen--Cahn equation are derived as $L^2$-gradient flows from their area (or diffused area) functional, it is natural to assume that the parabolic Allen--Cahn equation retains such a gradient flow structure. However, due to the presence of the free boundary, it is difficult to directly apply the usual variation technique and define the $L^2$-gradient flow of the free boundary Allen--Cahn energy. To this end, we introduce the notion of the inner gradient flow, which is based on the inner variation technique (see Section \ref{sec: inner gradient flow}). Then we show that indeed the parabolic free boundary Allen--Cahn equation is derived as an inner gradient flow of the free boundary Allen--Cahn energy $J^\textup{FBAC}_\epsilon$, confirming our expectation. This framework also generalizes to any parabolic problem with free boundary, e.g. Bernoulli type free boundary problem or Alt-Phillips type free boundary problem.

As can be seen in the elliptic case, it is natural to investigate the relation between the parabolic free boundary Allen--Cahn equation and the mean curvature equation. We answer this question by deriving the ``diffused" mean curvature flow equation (see \eqref{eq: eq for v})
\[
    v=-H-\partial_\nu\log |\nabla u|+f(u)/|\nabla u|,
\]
for any $u$ satisfying the nonlinear parabolic equation
\[
    \partial_tu=\Delta u-f(u).
\]
Here $\nu=\nabla u/|\nabla u|$ is the unit normal vector of level surfaces, $v$ denotes the normal velocity of each level surface (so level surfaces are moving with the velocity vector $v\nu$), and $H$ is the mean curvature of level surfaces. In particular, in the case of the free boundary Allen--Cahn equation $f\equiv 0$, we have\footnote{Note that under the sign change $u\mapsto -u$, the signs of $H$ and $v$ also change. However, the resulting equation still models the same mean curvature flow, as the level surface evolves according to the velocity vector $v\nu$, which is invariant under this sign change.}
\[
     v=-H-\partial_\nu\log |\nabla u|,
\]
and we can estimate the $C^\alpha$ size of the error term $\partial_\nu \log|\nabla u|$ by $\epsilon^{1-\alpha}$, as will be demonstrated in this work.


\subsection{Main results}

The main focus of this paper is to provide the uniform (in $\epsilon$ and in time) $C^{2,\alpha}$ convergence theorem for the interfaces of free boundary parabolic Allen--Cahn equation to the mean curvature flow. The immediate consequence of our result is

\begin{corollary}
    Let $\Gamma$ be a $C^{1,1}$ closed surface in $B_R(0)$, and let $u_0$ be the truncated signed distance from $\Gamma$ as in \eqref{eq: truncated signed distance}. If the critical set\footnote{Where the gradient vanishes, $\{\nabla u=0\}$.} doesn't appear in the time interval $[0,T)$, then the solution to the parabolic free boundary Allen-Cahn equation \eqref{eq: parabolic FBAC} with the initial condition $u_0$ converges to the mean curvature flow starting from $\Gamma$, as $\epsilon\rightarrow 0$, in $C^{2,\alpha}$ sense, uniformly in time. That is, if we denote the normal velocity of each level surface by $v$ and the mean curvature by $H$, then
    \[
        \|v+H\|_{C^\alpha(\{|u(\cdot,t)|<1\})}\leq C\epsilon^{1-\alpha},\quad\forall t\in [0,T),
    \]
    with $C>0$ depending on $n$, $R$, $\alpha$, and the $L^\infty$ norm of the second fundamental form of each level surface over the time interval $[0,T)$.
\end{corollary}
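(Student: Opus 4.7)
The plan is to reduce the corollary to the paper's main $C^\alpha$ estimate on the forcing term, exploiting the ``diffused'' mean curvature flow identity derived in the introduction. Since the free boundary Allen--Cahn equation corresponds to $f\equiv 0$, every regular level surface of $u(\cdot,t)$ evolves with normal velocity
\[
v=-H-\partial_\nu\log|\nabla u|,
\]
so $v+H=-\partial_\nu\log|\nabla u|$, and the claimed bound becomes
\[
\|\partial_\nu\log|\nabla u(\cdot,t)|\|_{C^\alpha(\{|u(\cdot,t)|<1\})}\le C\epsilon^{1-\alpha}\qquad\forall t\in[0,T),
\]
which is precisely the main uniform estimate announced in the abstract. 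The entire task therefore reduces to verifying that the hypotheses of that estimate are met under the data prescribed in the corollary.

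Next I would check compatibility of the initial datum. The truncated signed distance $u_0$ from the $C^{1,1}$ surface $\Gamma$, scaled by $1/\epsilon$, satisfies $|\nabla u_0|\equiv 1/\epsilon$ on the strip $\{|u_0|<1\}$, a tubular neighborhood of $\Gamma$ of width $2\epsilon$ on which the signed distance function is $C^{1,1}$. In particular the free boundary condition holds at $t=0$ and $\partial_\nu\log|\nabla u_0|\equiv 0$ pointwise, so the forcing term starts exactly at zero; this is the baseline from which one propagates algebraic decay. The assumption that the critical set is empty in $[0,T)$ guarantees $|\nabla u|>0$ throughout, so $\log|\nabla u|$ and the unit normal $\nu=\nabla u/|\nabla u|$ are well-defined uniformly in $t$; combined with the $L^\infty$ bound on the second fundamental forms of the level surfaces, this supplies the uniform-in-$t$ geometric input required to invoke the main theorem with constants depending only on $n$, $R$, $\alpha$, and the curvature bound. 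Plugging the resulting estimate into $v+H=-\partial_\nu\log|\nabla u|$ yields the stated inequality.

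The main obstacle for the corollary \emph{per se} is matching the two sets of hypotheses: the statement quantifies regularity through the second fundamental forms of the level surfaces, while the main theorem presumably phrases its assumptions directly in terms of $u$, so one must check that a uniform curvature bound on level surfaces together with the absence of critical set really produces the quantitative non-degeneracy of $|\nabla u|$ up to the free boundary that the main estimate requires. The genuinely hard part lies in the main theorem itself, namely propagating the initial eikonal relation $|\nabla u|\equiv 1/\epsilon$ to $C^\alpha$ closeness uniformly up to the free boundary over an $\epsilon$-independent time interval, under only such geometric hypotheses on the interface; that work is however the substance of the paper, not of this corollary.
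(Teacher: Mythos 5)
Your reduction is correct and is essentially the paper's own (implicit) argument: once one has $v+H=\partial_\nu\phi=-\partial_\nu\log|\nabla u|$ from \eqref{eq: direct equation of v H phi} with $f\equiv 0$, and once the truncated signed distance gives $\nabla\phi_0\equiv 0$ (hence \eqref{eq: initial condition}), the corollary is exactly Theorem~\ref{thm: main theorem 2} with the $\eta\max\{\epsilon,\eta\}$ factor absorbed into a constant depending on the curvature bound. The one hypothesis-matching ``obstacle'' you flag is actually vacuous here --- Theorem~\ref{thm: main theorem}/\ref{thm: main theorem 2} already phrases its assumption directly as $\|\mathbf{A}\|_{L^\infty}\le\eta$, i.e.\ in terms of the second fundamental form of the level surfaces, and the quantitative non-degeneracy of $|\nabla u|$ comes from the free boundary condition together with Lemma~\ref{lem: naive gradient estiamtes}, which is indeed internal to the main theorem rather than to this corollary.
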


To be more precise, the following are our main results. For the interior type estimate, we have



\begin{theorem}\label{thm: main theorem}
    Fix $T\in (0,1)$ and $n\geq 2$, and suppose $u\in C_{x,t}^2(\{|u|<1\})\cap C_{x,t}^0(B_1)$ is a classical solution to \eqref{eq: parabolic FBAC}. Then, in the pointwise sense, we have
    \begin{equation}\label{eq: direct equation of v H phi}
        v=-H+\partial_\nu\phi,\quad\text{in}\quad\{|u|<1\},
    \end{equation}  
    where $v=-\frac{\partial_tu}{|\nabla u|}$ is the normal velocity of the level surface of $u$ going through the point, $H$ is the mean curvature, and $\phi=\log(1/|\nabla u|)$.
    
    Furthermore, fix $\epsilon,\eta<1/2$, and assume that the initial condition $u_0=u(\cdot,0)$ satisfies  
    \begin{equation}\label{eq: initial condition}
        \|\nabla\phi_0\|_{L^\infty(\{|u_0|<1\})}\leq \epsilon\eta\max\{\epsilon,\eta\},\quad \text{and}\quad\|\nabla\phi_0\|_{C^\alpha(\{|u_0|<1\})}\leq \epsilon^{1-\alpha}\eta\max\{\epsilon,\eta\},
    \end{equation}
    where $\phi_0=-\log |\nabla u_0|$.
    Suppose $\|\mathbf{A}\|_{L^\infty(\{|u|<1\})},\|v\|_{L^\infty(\{|u|<1\})}\leq \eta$. Then, there exists $c=c(n)>0$ such that if $\epsilon\eta<c$, then for any $\alpha\in (0,1)$, we have interior estimates
    \begin{equation}\label{eq: Linfty estimate of grad phi}
        \|\nabla\phi\|_{L^\infty(\{|u(\cdot,t)|<1\}\cap B_{1/2})}\leq C(n)\epsilon\eta\max\{\epsilon,\eta\},\quad\forall t\in [0,T),
    \end{equation}
    and
    \begin{equation}\label{eq: quantitave rate of convergence}
        \|\nabla \phi\|_{C^\alpha(\{|u(\cdot,t)|<1\}\cap B_{1/2})}\leq C(n,\alpha)\epsilon^{1-\alpha}\eta\max\{\epsilon,\eta\},\quad\forall t\in [0,T).
    \end{equation} 
\end{theorem}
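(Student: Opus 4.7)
The plan splits the two halves of the theorem: the pointwise identity \eqref{eq: direct equation of v H phi} is a direct differential-geometric computation, while the quantitative bounds \eqref{eq: Linfty estimate of grad phi}--\eqref{eq: quantitave rate of convergence} rely on a Bochner-type PDE for $\phi$ combined with a rescaling argument. For the identity, I would expand $H = \mathrm{div}(\nabla u/|\nabla u|) = \Delta u/|\nabla u| + \nabla u\cdot\nabla(1/|\nabla u|)$; the second summand equals $-\partial_\nu |\nabla u|/|\nabla u| = \partial_\nu \phi$ by the chain rule. Substituting $\Delta u = \partial_t u$ and $v = -\partial_t u/|\nabla u|$ then gives $v = -H + \partial_\nu\phi$ at every interior point.

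Next I would derive the Bochner equation
\[
(\partial_t - \Delta)\phi = |\mathbf{A}|^2 - (\partial_\nu \phi)^2 \quad \text{in } \{|u|<1\},
\]
with constant Dirichlet data $\phi \equiv \log \epsilon$ inherited from $|\nabla u| = 1/\epsilon$ on $\partial\{|u|<1\}$. The derivation begins with $(\partial_t - \Delta)|\nabla u|^2 = -2|\nabla^2 u|^2$ (from $\partial_t u = \Delta u$), passes to $\phi = -\tfrac{1}{2}\log|\nabla u|^2$, and uses the decomposition $|\nabla^2 u|^2 = |\nabla u|^2 \bigl( |\mathbf{A}|^2 + 2|\nabla_T \phi|^2 + (\partial_\nu \phi)^2 \bigr)$ along normal and tangential Hessian blocks of the level sets. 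By the pointwise identity from the first half, $\partial_\nu \phi = v + H$, so the right-hand side is a priori bounded by $C(n)\eta^2$. Rescale $\bar u(x,t) := u(\epsilon x, \epsilon^2 t)$: the $\epsilon$-thin tube $\{|u|<1\}$ becomes $O(1)$-thick, $\bar\phi := \phi - \log\epsilon$ satisfies the same Bochner equation with zero boundary values and source of size $O(\epsilon^2\eta^2)$, and $|\bar{\mathbf{A}}|, |\bar v| \lesssim \epsilon\eta$. The parabolic maximum principle across the unit-thick tube gives $\|\bar\phi\|_{L^\infty} \lesssim \epsilon^2 \eta^2$, and parabolic boundary Schauder then yields $\|\nabla \bar\phi\|_{C^\alpha} \lesssim \epsilon^2 \eta^2$. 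Unwinding the scaling (with the $\epsilon^{-1}$ and $\epsilon^{-(1+\alpha)}$ factors for the $L^\infty$ and $C^\alpha$ norms of $\nabla\phi$) recovers both \eqref{eq: Linfty estimate of grad phi} and \eqref{eq: quantitave rate of convergence}, since $\eta^2 \leq \eta \max\{\epsilon,\eta\}$.

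The hard part will be running the boundary Schauder step uniformly up to the moving free boundary, whose normal velocity $v = -H + \partial_\nu\phi$ itself involves the quantity under estimate. To decouple, I would flatten a collar of $\partial\{u=1\}$ via Fermi coordinates, verify that the induced coordinate change is uniformly $C^{2,\alpha}$-in-space and Lipschitz-in-time with bounds depending only on $\|\mathbf{A}\|_{L^\infty}$ and $\|v\|_{L^\infty}$, and then apply classical parabolic Schauder in the flattened cylinder. A continuity-in-time argument --- letting $T^\ast \in (0, T]$ be the supremum of times where a doubled version of \eqref{eq: Linfty estimate of grad phi} holds, and using the smallness threshold $\epsilon\eta < c(n)$ to strictly improve the bound --- should then close the circular dependence and force $T^\ast = T$.
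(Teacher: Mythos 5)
Your first two steps coincide with the paper's: the identity \eqref{eq: direct equation of v H phi} via $H=\mathrm{div}(\nabla u/|\nabla u|)$ is the same computation the paper does through the decomposition $\Delta=\Delta_\Gamma+\partial_{\nu\nu}+H\partial_\nu$, and your Bochner equation is exactly \eqref{eq: evolution of phi free boundary}; likewise your ``rescale the $\epsilon$-thin tube and apply boundary parabolic Schauder'' is just the scaled form of the paper's use of gradient/$C^{1,\alpha}$ estimates with general boundary. The genuine gap is in the key $L^\infty$ improvement. You assert that ``the parabolic maximum principle across the unit-thick tube gives $\|\bar\phi\|_{L^\infty}\lesssim\epsilon^2\eta^2$,'' but the parabolic boundary of the (rescaled) tube is not only the free boundary (where $\bar\phi=0$) and $t=0$: it contains the lateral portion coming from $\partial B_1$, where nothing in the hypotheses makes $\phi-\log\epsilon$ small beyond the naive bound $O(\epsilon\eta)$ obtained by integrating $\frac{d}{d\tau}\sigma^{-1}=H-\sigma\Delta u$ across the tube (the paper's \eqref{eq: naive estimate sigma}). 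A direct maximum principle therefore only returns $O(\epsilon\eta)$, an order too weak, and the improved bound can only be \emph{interior} --- which is precisely why the theorem concludes on $B_{1/2}$ and why $\|v\|_{L^\infty}\le\eta$ is assumed. The paper closes this with a localized barrier $\Phi=C_\tau\epsilon^2(1-u^2)+C_x|x-G(t)|^2$, where $G(t)$ moves with the flow ($\dot G=(v\,\nu)\circ G$): the quadratic term dominates the $O(\epsilon\eta)$ lateral data on $\partial B_1$, costs only $O(\eta)$ under $\partial_t-\Delta$, and vanishes at the tracked point, while $(\partial_t-\Delta)\bigl(\epsilon^2(1-u^2)\bigr)=2\epsilon^2|\nabla u|^2\approx 2$ beats the $O(\eta^2)$ source. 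Your sketch has no such localization device, and without one the step fails. (Also, your claimed size $\epsilon^2\eta^2$ ignores the initial slice, which the hypothesis \eqref{eq: initial condition} only controls at order $\epsilon^2\eta\max\{\epsilon,\eta\}$; this is harmless for the final statement but should be corrected.)

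A second, smaller problem is the Schauder step: flattening a collar of the free boundary by Fermi coordinates with ``uniformly $C^{2,\alpha}$-in-space'' bounds depending only on $\|\mathbf{A}\|_{L^\infty}$ and $\|v\|_{L^\infty}$ is off by one derivative --- those bounds give a parabolic boundary that is only uniformly $C^{1,1}$ in space and Lipschitz in time, hence a flattening map that is uniformly $C^{1,1}$, not $C^{2,\alpha}$, so classical Schauder in the flattened cylinder is not directly applicable. The correct tool (and the one the paper uses) is the gradient and $C^{1,\alpha}$ estimate for parabolic equations with \emph{general} low-regularity boundary, e.g.\ \cite[Theorem 12.10]{lieberman1996second}, exploiting that the boundary datum $\phi\equiv\log\epsilon$ is constant on the free boundary. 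Finally, the continuity-in-time argument you add to ``close the circular dependence'' is unnecessary for this theorem, since $\|v\|_{L^\infty}\le\eta$ is a hypothesis; that open-closed device is what the paper deploys for the global statement (Theorem \ref{thm: main theorem 2}), where the velocity bound is not assumed.
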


Note that in the interior estimate presented above, we have assumed an $L^\infty$ bound on the normal velocity $v$. This assumption is necessary because no restrictions have been imposed on the boundary of the cylinder $\partial B_1\times[0,T]$. Since the parabolic equation is not assumed to hold outside the ball $B_1$, it is generally possible for adverse data from beyond the ball to enter the domain. Due to the infinite speed of propagation, such data can immediately invalidate estimates even in the interior. However, if the interface $\{|u|<1\}$ is compactly contained within a ball (as in the case of mean curvature flow evolving from a closed surface without boundary), then a global estimate holds without an assumption on $v$:

\begin{theorem}\label{thm: main theorem 2}
    Let $u$ as in Theorem \ref{thm: main theorem}, with the bound on the normal velocity $\|v\|_{L^\infty(\{|u|<1\})}\leq \eta$ replaced by $\{|u|<1\}\subset B_R(0)$ for some $R>0$. Then, there exists $c=c(n,R)$ such that if $\epsilon\eta<c$, then
    \begin{equation}\label{eq: Linfty estimate of grad phi, closed ver}
        \|\nabla\phi\|_{L^\infty(\{|u(\cdot,t)|<1\})}\leq C(n,R)\epsilon\eta\max\{\epsilon,\eta\},\quad\forall t\in [0,T),
    \end{equation}
    and
    \begin{equation}\label{eq: quantitave rate of convergence, closed ver}
        \|\nabla \phi\|_{C^\alpha(\{|u(\cdot,t)|<1\})}\leq C(n,R,\alpha)\epsilon^{1-\alpha}\eta\max\{\epsilon,\eta\},\quad\forall t\in [0,T).
    \end{equation} 
\end{theorem}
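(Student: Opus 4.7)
The plan is to reduce Theorem \ref{thm: main theorem 2} to Theorem \ref{thm: main theorem} via a bootstrap in time that converts the geometric confinement $\{|u|<1\}\subset B_R(0)$ into the missing $L^\infty$ bound on the normal velocity $v$. The key observation is the pointwise identity \eqref{eq: direct equation of v H phi}, $v = -H + \partial_\nu\phi$, which, combined with $\|\mathbf{A}\|_{L^\infty}\le\eta$ (controlling $|H|$), reduces bounding $v$ to bounding $\partial_\nu\phi$. But the latter is precisely the output of Theorem \ref{thm: main theorem}, so the two sit in a circular dependence that I would close by a standard continuity argument in $t$.

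Concretely, I would first cover $B_R$ by balls $\{B_{1/2}(x_i)\}_{i=1}^{N(n,R)}$ with $x_i\in B_R$; since $u\equiv \pm 1$ off $B_R$, the function $u$ automatically satisfies the structural hypotheses of Theorem \ref{thm: main theorem} on each translated unit ball $B_1(x_i)$, and the initial condition \eqref{eq: initial condition} restricts to it. Fix an enlarged threshold $\eta^* := C_0(n)\,\eta$ with $C_0(n) > n-1$ chosen so that $|H|\le(n-1)\eta < \eta^*$, and define
\[
T^* := \sup\Bigl\{\,\tau\in [0,T) : \sup_{t\in[0,\tau]}\|v(\cdot,t)\|_{L^\infty(\{|u(\cdot,t)|<1\})}\le \eta^*\,\Bigr\}.
\]
At $t=0$, the identity \eqref{eq: direct equation of v H phi} together with \eqref{eq: initial condition} yields $\|v(\cdot,0)\|_\infty \le (n-1)\eta + \epsilon\eta\max\{\epsilon,\eta\} < \eta^*$, so $T^*>0$ by continuity of $v$ in $t$. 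For every $t<T^*$, Theorem \ref{thm: main theorem} applies at each $x_i$ with parameter $\eta^*$ in place of $\eta$ (the assumption \eqref{eq: initial condition} with the original $\eta$ is automatically stronger than the one with $\eta^*$), yielding
\[
\|\nabla\phi(\cdot,t)\|_{L^\infty(\{|u(\cdot,t)|<1\}\cap B_{1/2}(x_i))}\le C(n)\,\epsilon\eta^*\max\{\epsilon,\eta^*\}.
\]
Taking the maximum over $i$ covers the entire interface and produces \eqref{eq: Linfty estimate of grad phi, closed ver}, and then \eqref{eq: direct equation of v H phi} gives $|v|\le (n-1)\eta+C(n)\,\epsilon\eta^*\max\{\epsilon,\eta^*\} < \eta^*$ provided $\epsilon\eta < c(n,R)$ is small enough. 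Temporal continuity of $v$ then rules out $T^*<T$, propagating the bound to the whole interval $[0,T)$. The Hölder estimate \eqref{eq: quantitave rate of convergence, closed ver} follows at once by invoking the $C^\alpha$ part of Theorem \ref{thm: main theorem} on each ball of the covering, once the uniform $L^\infty$ velocity bound is in place.

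The hard part is the closure of this bootstrap: one must track the constants carefully so that substituting $\eta^*$ for $\eta$ in Theorem \ref{thm: main theorem}, and feeding the resulting gradient bound back through $v = -H + \partial_\nu\phi$, produces a strict improvement upon $\eta^*$. This is exactly what pins down the smallness threshold $c(n,R)$ in the statement. The covering and continuity pieces are otherwise routine once this closure is in hand.
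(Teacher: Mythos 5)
Your proposal is correct and follows essentially the same route as the paper's proof: a continuity-in-time bootstrap that closes the circular dependence between the velocity bound and the estimates of Theorem \ref{thm: main theorem}, using the identity \eqref{eq: direct equation of v H phi} together with $\|\mathbf{A}\|_{L^\infty}\le\eta$. The only cosmetic differences are that the paper bootstraps on $\|\nabla\phi\|_{L^\infty}\le\eta$ rather than on $\|v\|_{L^\infty}\le C_0(n)\eta$ (so no inflated threshold is needed) and reruns the interior argument globally using $\{|u|<1\}\subset B_R$ instead of covering by unit balls, but the structure of the maximal-time argument and the origin of the smallness condition $\epsilon\eta<c$ are the same.
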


\begin{remark}\label{rmk: signed distance initial condition}
    The simplest choice of the initial condition $u_0$ satisfying the condition in Theorem \ref{thm: main theorem} is the truncated signed distance function from a given surface. That is, if we denote by $\Gamma$ a smooth surface immersed in $\Omega$, with $C^{1,1}$ bound $\eta< 1/2$, and
    \begin{equation}\label{eq: truncated signed distance}
        u_0:=\max\{\min\{d_\Gamma/\epsilon,1\},-1\},
    \end{equation}
    where $d_\Gamma$ is the signed distance function from $\Gamma$. Then $|\nabla u_0|=1$ on $\partial\{|u_0|<1\}$ is trivially satisfied, and $\phi_0=-\log |\nabla u_0|=\epsilon$, so $\nabla\phi_0\equiv 0$, thus it satisfies the initial condition \eqref{eq: initial condition}. One can consider a problem starting from an initial data without restrictions \eqref{eq: initial condition}. This case can be considered one in which the heat dissipation dominates over the mean curvature flow aspect of the problem, and we cannot expect to have the estimates \eqref{eq: Linfty estimate of grad phi} and \eqref{eq: quantitave rate of convergence}. Yet, even with an arbitrary initial condition, if a priori we assume the parabolic equation converges over time and the resulting elliptic equation is regular without singularity, we can establish the same $C^\alpha$ estimate after a sufficient time.
\end{remark}



\begin{remark}
    We are working in the Euclidean setting, because to the authors' knowledge, we do not yet have a parabolic theory with $C^{1,\alpha}$ estimates with general parabolic boundary (such as \cite{lieberman1996second}), in the Riemannian manifold setting. However, once we have, the argument in this paper naturally generalizes to Riemannian manifold setting, as in \cite{an2025second}. 
\end{remark}

Additionally, we define the notion of inner gradient flow, and show that parabolic free boundary equations arise as an inner gradient flow of energy functionals.
\begin{theorem}\label{thm: inner grad flow}
    Let $u(\cdot,t):=u\circ (\Phi(t))^{-1}$ be an inner gradient flow of $J^\delta_\epsilon$ defined as in Definition \ref{def: innergradientflow}. Then $u$ satisfies
    \[
        \left\{
        \begin{alignedat}{2}
            \partial_tu&=2\Delta u-W_\delta'(u)/\epsilon^2&&\quad\text{in}\quad\{|u|<1\}\\
            |\nabla u(\cdot,t)|&=0&&\quad\text{on}\quad\partial\{|u(\cdot,t)|<1\},\quad\forall t
        \end{alignedat}
        \right.
    \]
    for $\delta \in (0,1]$, and
    \[
        \left\{
            \begin{alignedat}{2}
                \partial_tu&=2\Delta u&&\quad\text{in}\quad\{|u|<1\}\\
            |\nabla u(\cdot,t)|&=\epsilon^{-1}&&\quad\text{on}\quad\partial\{|u(\cdot,t)|<1\},\quad\forall t,
            \end{alignedat}
        \right.
    \]
    for $\delta=0$.
\end{theorem}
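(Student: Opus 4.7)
The plan is to compute the first inner variation of $J_\epsilon^\delta$ at a state $u$ along a flow of diffeomorphisms, rewrite it via the associated stress-energy tensor so as to separate a bulk interior contribution from a free-boundary contribution, and then identify the flow velocity using Definition \ref{def: innergradientflow}. Concretely, I would fix a compactly supported vector field $X$ on $\Omega$, take $\Phi_s=\mathrm{id}+sX+O(s^2)$, and set $u_s:=u\circ\Phi_s^{-1}$. The change of variables $y=\Phi_s^{-1}(x)$ converts the energy to $J_\epsilon^\delta(u_s)=\int_{\{|u|<1\}}[\epsilon|(D\Phi_s)^{-T}\nabla u|^2+W_\delta(u)/\epsilon]\det(D\Phi_s)\,dy$, where for $\delta=0$ I would replace the second integrand by $\det(D\Phi_s)/\epsilon$ directly --- using $|\Phi_s(\{|u|<1\})|/\epsilon=\int_{\{|u|<1\}}\det(D\Phi_s)\,dy/\epsilon$ --- to avoid ever differentiating the indicator $\chi_{(-1,1)}$. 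Differentiating at $s=0$ with $\partial_s(D\Phi_s)^{-T}|_0=-(DX)^T$ and $\partial_s\det D\Phi_s|_0=\mathrm{div}(X)$ yields, in stress-energy form,
\[
\delta J_\epsilon^\delta[X]=\int T_{ij}\,\partial_iX_j\,dx,\qquad T_{ij}:=\bigl(\epsilon|\nabla u|^2+W_\delta(u)/\epsilon\bigr)\delta_{ij}-2\epsilon\,\partial_iu\,\partial_ju,
\]
with the $\delta=0$ potential contributing only an interior $\chi_{\{|u|<1\}}/\epsilon$ piece.

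Next I would integrate by parts. A short calculation gives $\partial_iT_{ij}=(-2\epsilon\,\Delta u+W_\delta'(u)/\epsilon)\,\partial_ju$ on $\{|u|<1\}$, so the bulk part of $\delta J_\epsilon^\delta[X]$ is $\int_{\{|u|<1\}}(2\epsilon\,\Delta u-W_\delta'(u)/\epsilon)(\nabla u\cdot X)\,dx$. On $\partial\{|u|<1\}$ the function $u$ is constant, so $\nabla u\parallel\nu_{\mathrm{out}}$, forcing $\nabla u\cdot X=(\nabla u\cdot\nu_{\mathrm{out}})(X\cdot\nu_{\mathrm{out}})$; combined with $W_\delta(\pm1)=0$ for $\delta>0$, the Dirichlet boundary terms consolidate to $-\epsilon|\nabla u|^2(X\cdot\nu_{\mathrm{out}})$, while for $\delta=0$ the divergence-theorem treatment of the indicator part supplies an extra $(1/\epsilon)(X\cdot\nu_{\mathrm{out}})$. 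Feeding this into Definition \ref{def: innergradientflow}, the inner-gradient-flow velocity $V=\partial_t\Phi\circ\Phi^{-1}$ is pinned by balancing $\delta J_\epsilon^\delta[\cdot]$ against the prescribed inner-variation metric. Testing against $X$ supported in the interior of $\{|u|<1\}$ yields a pointwise identity which, via the chain rule $\partial_tu=-\nabla u\cdot V$, becomes $\partial_tu=2\Delta u-W_\delta'(u)/\epsilon^2$ for $\delta\in(0,1]$ and $\partial_tu=2\Delta u$ for $\delta=0$ --- the extra factor $1/\epsilon$ coming from the weight built into the inner-variation metric. Testing against $X$ supported near $\partial\{|u|<1\}$ kills the bulk and forces the boundary coefficient to vanish, giving $|\nabla u|=0$ for $\delta\in(0,1]$ and $|\nabla u|=1/\epsilon$ for $\delta=0$.

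The most delicate step is the $\delta=0$ case, where the discontinuous potential $\chi_{(-1,1)}/\epsilon$ carries the entire mechanism pinning $|\nabla u|=1/\epsilon$ on the free boundary through a purely distributional jump, and where the interior PDE has no potential contribution whatsoever. The clean way I would handle this is exactly as indicated above: never differentiate $\chi_{(-1,1)}$, but instead treat the potential term as the set-measure $|\Phi_s(\{|u|<1\})|/\epsilon$, differentiate the change-of-variables integrand in $s$ (which is smooth), and then convert the resulting $\int_{\{|u|<1\}}\mathrm{div}(X)/\epsilon\,dx$ into the surface integral $\int_{\partial\{|u|<1\}}(X\cdot\nu_{\mathrm{out}})/\epsilon\,d\mathcal{H}^{n-1}$ by the divergence theorem. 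This requires only the mild regularity of $\partial\{|u|<1\}$ that Definition \ref{def: innergradientflow} should already presuppose, and avoids any distributional subtlety entirely.
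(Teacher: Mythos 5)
Your proposal follows essentially the same route as the paper: pull the energy back under the diffeomorphism flow, expand to first order to obtain the stress-energy (inner-variation) form, integrate by parts to separate the bulk term $\bigl(2\epsilon\Delta u-\tfrac{1}{\epsilon}W_\delta'(u)\bigr)\nabla u$ from the boundary term, identify $V$ by testing with interior-supported fields (recovering the parabolic equation via $\partial_t u=-\nabla u\cdot V$), and then force the normal boundary coefficient of $T$ to vanish, which gives $|\nabla u|=0$ for $\delta\in(0,1]$ and $|\nabla u|=\epsilon^{-1}$ for $\delta=0$. Your device of treating the $\delta=0$ potential as the volume term $|\Phi_s(\{|u|<1\})|/\epsilon$ is exactly the paper's observation that $W_\delta(u_t\circ\Phi_t)=W_\delta(u)$, so only the Jacobian factor is differentiated and the indicator is never differentiated.
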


\begin{remark}
    The exact same construction can be done with any parabolic problems with free boundaries. For example, we can derive an identical result for Bernoulli or Alt-Phillips type one-phase parabolic free boundary problems. That is to say, one can consider the potential function of the form $\chi_{\{u>0\}}$ or $u^\gamma \chi_{\{u>0\}}$, and derive the corresponding parabolic equations as inner gradient flows.
\end{remark}

The main advantage of working within the free boundary Allen--Cahn equation is that we can directly derive a clean parabolic equation of $\phi:=-\log|\nabla u|$ \eqref{eq: evolution of phi free boundary} without $\epsilon$-degenerate nonlinearity. Then the standard parabolic estimate (with general boundary) gives $C^\alpha$ estimate on the size of the error $\partial_\nu\phi$ (in fact, we get $C^\alpha$ bound of much stronger quantity, $\nabla\phi$), with the maximal time independent of $\epsilon$. This allows us not only to establish the $C^{2,\alpha}$ convergence, but also to obtain a quantitative rate of convergence such as \eqref{eq: quantitave rate of convergence} with a rather elementary argument. One may interpret it as reflecting the simple structure of the free boundary Allen--Cahn equation (no nonlinearity in the interface $\{|u|<1\}$) in comparison with the classical Allen--Cahn equation, for which Wang--Wei \cite{wang2019finite,wang2019second} and Nguyen-Wang \cite{the2024second} developed the method based on carefully constructed barrier functions.

Also note that the uniform $C^{1,1}$ assumption on the level surfaces $\|\mathbf{A}\|_{L^\infty}\leq \eta$ is natural, because in general, a mean curvature flow can form a singularity within finite time (consider a mean curvature flow starting from a sphere). That is, in general we cannot expect an analogous result of the $C^{0,1}$-to-$C^{1,\alpha}$ estimate of Caffarelli-C\'ordoba \cite{caffarelli1995uniform,caffarelli2006phase} in the parabolic case, without an additional assumption such as low entropy or density ratio. It is an interesting problem to investigate a general regularity theorem analogous to Brakke's regularity theorem \cite{brakke1978motion,kasai2014general,tonegawa2014second,stuvard2022endtime,motegi20252} and \cite{nguyen2020brakke}, in the free boundary Allen--Cahn case.





The organization of the paper is as follows. In Section \ref{sec: notations and level set flow}, we introduce curvature quantities and the level set flow framework, originally introduced in \cite{an2025second}. In Section \ref{sec: inner gradient flow}, we develop the notion of the inner gradient flow, and show that indeed parabolic free boundary equations can be derived as an inner gradient flow of the corresponding energy functional.  In Section \ref{sec: parabolic equation as diffused mean curvature flow}, we derive the diffused mean curvature flow equation
\[
    v=-H+\partial_\nu\phi+f(u)/|\nabla u|
\]
and the parabolic equation \eqref{eq: evolution of phi with nonlinearity} satisfied by $\phi$. Finally, in Section \ref{sec: Parabolic estimates for the free boundary Allen--Cahn}, we consider the parabolic free boundary Allen--Cahn equation (i.e. vanishing nonlinearity $f\equiv 0$), and run the parabolic version of the uniform $C^{1,1}$-to-$C^{2,\alpha}$ estimate in \cite{an2025second} and deduce that the error term $\partial_\nu\phi$ converges to zero in the limit $\epsilon\rightarrow 0$ in $C^\alpha$ sense. 

\subsection{Notations}
\begin{center}
\begin{tabular}{p{2.5cm}p{11cm}}
$\mathbf{C}$ & $\frac{\nabla^2u}{|\nabla u|}$, the external second fundamental form of $u$\\
$\nu$ & $\frac{\nabla u}{|\nabla u|}$, unit normal vector of level surfaces of $u$\\
$P$ & $I-\nu\otimes\nu$, tangential projection onto the level surface\\
$\mathbf{A}$ & $P\mathbf{C}P$, the second fundamental form of level surfaces of $u$\\
$\mathbf{B}$ & $\nabla\nu=P\mathbf{C}$, the enhanced second fundamental form of $u$\\
$H$ & $\text{tr}(\mathbf{A})$, the mean curvature of level surfaces of $u$\\
$F$ & Immersion map of level surfaces of $u$ (see Section \ref{sec: notations and level set flow})\\
$\phi$ & $\log(1/|\nabla u|)$, potential of the tangential curvature $\nabla\phi$\\
$v$ & $-\frac{\partial_tu}{|\nabla u|}$, surface-normal velocity of level surfaces\\
$\partial_\nu u$ & $\nabla u\cdot\nu$, the normal derivative of $u$\\
$\partial_{\nu\nu}u$& $\nu^T(\nabla^2u)\nu$\\
$\{|u|<1\}$& $\{(x,t)\in \Omega\times[0,T):|u(x,t)|<1\}$\\
$\partial\{|u|<1\}$& The parabolic boundary of $\{|u|<1\}$ (see Section \ref{sec: Parabolic estimates for the free boundary Allen--Cahn})\\
$J_\epsilon^{\textup{FBAC}},\,J_\epsilon^\delta$ & Allen-Cahn type energy functionals, see \eqref{eq: AC type energy}.
\end{tabular}
\end{center}

\section{Curvature quantities and the level set flow}\label{sec: notations and level set flow}

\subsection{Quantities related to the curvature of the level surfaces}

Let $u$ be a smooth function in $\Omega$, and $\nabla u\neq 0$ at the point of interest. We denote $\mathbf{C}:=\frac{\nabla^2 u}{|\nabla  u|}$ and call it an \textit{external second fundamental form}, in the sense that, when restricted to its tangential components, it coincides with the second fundamental form of the level surfaces, $\mathbf{A}$. That is to say, for any $x\in\{u=\tau\}$,
\[
    \mathbf{C}(x)(\xi,\zeta)=\mathbf{A}(x,\tau)(\xi,\zeta),\quad\forall \xi,\zeta\in T_x\{v=\tau\}.
\]
Note that $\mathbf{C}=P^T\mathbf{A}P$, where $P=I-\nu\otimes\nu$ is the tangential projection onto the level surface, and $\nu=\frac{\nabla u}{|\nabla u|}$ is the unit normal vector to the level surfaces. As pointed out in \cite{SZ98,padilla1998convergence}, since it arises naturally in the study of the stable solutions of the Allen--Cahn equation, the quantity $\mathbf{B}:=\nabla\nu=P\mathbf{C}$ is often used (especially in the context of Sternberg-Zumbrun stability), and called \textit{enhanced second fundamental form}; see also \cite{wang2019finite,the2024second}.

For clarity, we introduce the block form representation of these quantities: in the choice of the coordinate that $\nu=\frac{\nabla u}{|\nabla u|}=e_n$,
\begin{gather}\label{eq: block form C}
\centering
\mathbf{C} = \frac{\nabla^2u}{|\nabla u|}=
\left(
\begin{array}{c|c}
\mathbf{A} & -\nabla_T\phi \\ \hline
-(\nabla_T\phi)^T & \frac{\Delta u}{|\nabla u|}-\text{tr}(\mathbf{A}) \\
\end{array}
\right)
\end{gather}
and
\begin{gather}\label{eq: block form B}
\centering
\mathbf{B} = \nabla \frac{\nabla u}{|\nabla u|}=
\left(
\begin{array}{c|c}
\mathbf{A} & -\nabla_T\phi \\ \hline
0 & 0 \\
\end{array}
\right),
    \qquad
\nabla\phi
=\nabla\log (1/|\nabla u|)=
\left(\begin{array}{c}
\nabla_T\phi \\ \hline
\text{tr}(\mathbf{A})-\frac{\Delta u}{|\nabla u|} \\
\end{array}\right).
\end{gather}

\subsection{Level set flow framework}\label{sec: level set flow framework}

We provide an overview of the level set flow framework first developed in \cite{an2025second}. It is not heavily used within this work, except for the initial gradient estimate (Lemma \ref{lem: naive gradient estiamtes}). Therefore, readers who are interested in $C^{1,1}$-to-$C^{2,\alpha}$ estimate may jump directly to Section \ref{sec: parabolic equation as diffused mean curvature flow}. In this article, we will work with functions on Euclidean space, and for the generalized framework on a Riemannian manifold setting, see \cite[Section 2]{an2025second}.

Let $u:\mathbb{R}^n\rightarrow \mathbb{R}$ be a smooth function in an open connected domain $\Omega\subset \mathbb{R}^n$. Denote $\Gamma=\{u=\tau_0\}$ as the $\tau_0$-level surface of  $u$. Let $F:\Gamma\times\mathbb{R}\rightarrow\Omega\subset\mathbb{R}^n$ be an immersion of the $\tau$-level surface of $u$ in $\mathbb{R}^n$, defined in the following manner: for each $x\in \Gamma$ with $|\nabla u(x)|\neq 0$, $F(x,\cdot):\mathbb{R}\rightarrow \Omega$ solves
    \begin{equation}\label{eq: definition of F}
        F(x,\tau_0)=x,\textrm{ and }\frac{dF}{d\tau}(x,\tau)=\frac{\nabla u\circ F(x,\tau)}{|\nabla  u\circ F(x,\tau)|^2},
    \end{equation}
    for all admissible $\tau\in \mathbb{R}$. Then
    \[
    \frac{d}{d\tau}u\circ F(x,\tau)=\frac{\nabla  u\circ F(x,\tau)\cdot\nabla  u\circ F(x,\tau)}{|\nabla  u\circ F(x,\tau)|^2}=1,
    \]
and thus, $F$ preserves level surfaces along $\tau$:
\[
u\circ F(x,\tau)=\tau.
\]

Now, consider level surfaces $F(\Gamma,\tau)\subset\{u=\tau\}$ as a geometric flow in $\mathbb{R}^n$, with the immersion metric $g(\tau)$. We denote $\Delta_\Gamma$ as the Laplace-Beltrami operator on the immersed manifold $(\Gamma,g)$ (note that the metric $g$ depends on $\tau$, and thus all related quantities as well). Additionally, we say $\sigma:=1/|\nabla  u\circ F|$ is the surface-normal velocity, and $\nu:=\frac{\nabla  u}{|\nabla  u|}$ is the unit normal vector of level surfaces, respectively (increasing direction of $u$ is considered ``outward"). Note that we choose the sign of the mean curvature $H$ such that the mean curvature of a sphere is positive (with the unit normal pointing outward). Also recall that the gradient of $u$ is always normal to its level surface, so $F$ is a normal flow, i.e., 
\[
\frac{d}{d\tau}F=\sigma\nu.
\]

Then the level surface of $u$ follows the next evolution equation:

\begin{lemma}[Evolution equation]\label{lem: HCMF general}
     The level surface $F(\Gamma,\tau)=\{u=\tau\}$ satisfies a hyperbolic mean curvature type flow: 
    \begin{equation}\label{eq:HMCF general}
        \frac{d }{d\tau}\sigma=\sigma ^2(H-\sigma \Delta u\circ F).
    \end{equation}
\end{lemma}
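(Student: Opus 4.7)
The plan is to reduce everything to a direct chain-rule computation of $\frac{d}{d\tau}|\nabla u\circ F|^{-1}$ along the flow $F$, and then convert the normal-normal Hessian term that appears into the combination $\Delta u - H|\nabla u|$ using the block decomposition \eqref{eq: block form C}.

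First I would differentiate $|\nabla u\circ F|^{2}$ in $\tau$. Using $\frac{dF}{d\tau}=\nabla u/|\nabla u|^{2}$ evaluated at $F(x,\tau)$, the chain rule gives
\[
    \frac{d}{d\tau}|\nabla u\circ F|^{2}
    = 2(\nabla u)^{T}(\nabla^{2}u)\,\frac{\nabla u}{|\nabla u|^{2}}
    = 2\,\nu^{T}(\nabla^{2}u)\,\nu
    = 2\,\partial_{\nu\nu}u,
\]
where I used $\nabla u=|\nabla u|\nu$ to absorb the factor $|\nabla u|^{2}$. Dividing by $2|\nabla u|$ yields $\frac{d}{d\tau}|\nabla u|=\sigma\,\partial_{\nu\nu}u$, and then $\sigma=|\nabla u|^{-1}$ gives
\[
    \frac{d\sigma}{d\tau} = -\sigma^{2}\,\frac{d}{d\tau}|\nabla u| = -\sigma^{3}\,\partial_{\nu\nu}u.
\]

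The second step is to rewrite $\partial_{\nu\nu}u$ in terms of $\Delta u$ and $H$. This is precisely the $(n,n)$ entry of $\mathbf{C}$ in \eqref{eq: block form C}: in any orthonormal frame with $e_{n}=\nu$, taking the tangential trace of $\mathbf{C}=\nabla^{2}u/|\nabla u|$ gives $H$, so $\mathbf{C}_{nn}=\Delta u/|\nabla u|-H$, i.e.\ $\partial_{\nu\nu}u=\Delta u-H|\nabla u|$. Equivalently one can split the Hessian into its tangential $(n-1)$-dimensional trace (which equals $H|\nabla u|$) and its normal-normal component, giving the same identity. Substituting back,
\[
    \frac{d\sigma}{d\tau}
    = -\sigma^{3}\bigl(\Delta u - H|\nabla u|\bigr)
    = \sigma^{2}H - \sigma^{3}\,\Delta u\circ F
    = \sigma^{2}\bigl(H - \sigma\,\Delta u\circ F\bigr),
\]
since $\sigma|\nabla u|=1$. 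This is exactly \eqref{eq:HMCF general}.

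There is no real obstacle here; the only point requiring a little care is the sign and placement of $|\nabla u|$ when passing between $\partial_{\nu\nu}u$ and the $(n,n)$ entry of $\mathbf{C}$, and to remember that along the normal flow $F$ the point of evaluation of $\nabla u$, $\nabla^{2}u$, $H$, $\nu$, and $\Delta u$ is $F(x,\tau)$, not the base point $x$. Once that bookkeeping is done, the identity \eqref{eq:HMCF general} falls out in a couple of lines.
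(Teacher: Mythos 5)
Your proof is correct and follows essentially the same route as the paper: a chain-rule differentiation of $\sigma=1/|\nabla u\circ F|$ along the flow giving $-\sigma^{3}\partial_{\nu\nu}u$, followed by the identity $\partial_{\nu\nu}u=\Delta u-H|\nabla u|$, which you read off from the $(n,n)$ entry of the block form \eqref{eq: block form C} while the paper obtains it from $0=\Delta_\Gamma u=\Delta u-\partial_{\nu\nu}u-H\partial_\nu u$ — the same fact in two notations. No gaps; the bookkeeping of evaluation at $F(x,\tau)$ is handled correctly.
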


\begin{proof}
    From the decomposition formula 
    \[
        \Delta=\Delta_\Gamma+\partial_{\nu\nu}+H\partial_\nu,
    \]
    we have
    \begin{equation}
        0=\Delta_{\Gamma}u=\Delta u-\partial_{\nu\nu}u-H\partial_{\nu}u.
    \end{equation}
    The reason $\Delta_{\Gamma}u=0$ is that $u$ is constant on its level surface $\Gamma$. Then we obtain the equation
    \begin{align*}
        \frac{d\sigma }{d\tau}(x,\tau)&=-\frac{(\nabla^2u)\circ F(x,\tau))(\nu,\nu)}{|\nabla  u\circ F(x,\tau)|^3}\\
        &=-\frac{\partial_{\nu\nu}u\circ F(x,\tau)}{(\partial_{\nu}u\circ F(x,\tau))^3}\\
        &=\frac{H(x,\tau)\partial_\nu u\circ F(x,\tau)-\Delta u\circ F(x,\tau)}{(\partial_\nu u\circ F(x,\tau))^3}\\
        &=\sigma^2(x,\tau)\left(H(x,\tau)-\sigma (x,\tau)\Delta u\circ F(x,\tau)\right).
    \end{align*}
    This concludes the proof.
\end{proof}

\section{Inner gradient flow}\label{sec: inner gradient flow}

As the mean curvature flow and the parabolic Allen--Cahn equation $\partial_tu - \Delta u = u^3 - u$ can be viewed as gradient flows from their energy functionals, it is reasonable to expect that the free boundary Allen--Cahn equation also has a gradient flow structure from the energy $J^{\textup{FBAC}}_\epsilon$. However, the presence of the free boundary complicates the application of the usual variational technique (sometimes called \textit{outer variation}). For this reason, there are a limited number of works dealing with parabolic free boundary equations, with gradient flow structure.
 
For example, in \cite{caffarelli1995free,weiss2003singular,AnderssonWeiss+2009+213+235}, they studied the parabolic free boundary equation as the limit of ``regularized'' parabolic equations. However, while being intuitive, the unification of the free boundary parabolic equation in the setting of the gradient flow, or a precise justification of the condition $|\nabla u|=1$ on $\partial\{|u|<1\}$ throughout the time evolution was not explicitly announced anywhere apart from \cite{yamaura2007gradient} in the one-dimensional case, to the authors' knowledge.

To this end, we introduce the notion of the \textit{inner gradient flow}, which is defined as the gradient flow based on the inner variation of the energy functional. Indeed, using the notion of the inner variation, we can explicitly see that the parabolic free boundary equation is derived as a gradient flow, and moreover why the free boundary condition $|\nabla u|=1$ must be satisfied throughout the time.

\begin{definition} 
Define the group of smooth diffeomorphisms on $\Omega$ by
\[
    \mathcal{D}:=\left\{
    \begin{aligned}
            &\Phi:\Omega\rightarrow \Omega,\,\Phi\text{ is a smooth diffeomorphism,}\\
            &\textup{Id}-\Phi\text{ is compactly supported in }\Omega
    \end{aligned}
    \right\}.
\]
Since Omori \cite{omori1970group}, it is known that $\mathcal{D}$ is a Hilbert manifold when endowed with $H^s$ inner product structure with $s>\frac{n}{2}+1$ (see also \cite{ebin1970manifold,ebin1970groups}). In this notion, any element $U$ in the tangent space $T_\Phi\mathcal{D}$ is compactly supported in $\Omega$, and we can provide the notion of the inner product on $T_\Phi\mathcal{D}$ (for the precise notion, see \cite{shkoller2000analysis}). Then, for a fixed $u$, we can consider $J(u)$ as a functional on $\mathcal{D}$, by
\[
    J(u)[\Phi]:=J(u\circ \Phi^{-1}),
\]
and the first variation
\[
    \delta J(u)[\Phi,U]:=\frac{d}{dt} J(u((\exp_\Phi(tU))^{-1})),\quad\forall U\in T_\Phi\mathcal{D},
\]
where $\exp_\textup{Id}$ is the exponential map on $\mathcal{D}$ at $\Phi$. Then we compute the formula for the first variation. 
\end{definition}

\begin{lemma}[First variation formula]
For any $U\in T_\Phi\mathcal{D}$,
    \begin{equation}\label{eq: first inner variation formula}
        \begin{split}
            \delta J(u)[\Phi,U]&=\int_{\{|u_\Phi|<1\}} \left(2\epsilon\Delta u_\Phi-\frac{1}{\epsilon}W'(u)\right)\nabla u_\Phi\cdot U\,dx\\
            &\hspace{4mm}+\int_{\partial(\Omega\cap\{|u_\Phi|<1\})}\mathbf{n}^T\textup{tr}_{\partial(\Omega\cap\{|u_\Phi|<1\})}(T(u_\Phi)U)\,d\mathcal{H}^{n-1},
        \end{split}
    \end{equation}
    where $u_\Phi:=u\circ\Phi^{-1}$,
    \begin{equation}\label{eq: def of T}
        T(u):=2\epsilon\nabla u\otimes \nabla v-e_\epsilon(u)I,
    \end{equation}
    and $e_\epsilon(u)$ is the energy density,
    \[
        e_\epsilon(u):=\epsilon|\nabla v|^2+\frac{W_\delta(u)}{\epsilon}.
    \]
\end{lemma}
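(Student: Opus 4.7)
The plan is to pull back $u$ by the one-parameter family $\Phi_t := \exp_\Phi(tU)$, differentiate at $t=0$, recognize the Noether stress--energy tensor $T(u)$ arising naturally in the derivative, and then perform a single integration by parts to produce the Euler--Lagrange operator in the interior and a trace term on the boundary. By replacing $u$ with $u_\Phi = u\circ\Phi^{-1}$ from the outset, I may assume $\Phi = \textup{Id}$, since inner variations are left-invariant under relabelling of the reference diffeomorphism.

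After the change of variables $y = \Phi_t^{-1}(x)$, writing $M_t := (D\Phi_t)^{-1}$ and using $\nabla(u\circ\Phi_t^{-1})(x) = M_t(y)^T\nabla u(y)$,
\[
    J_\epsilon^\delta(u\circ\Phi_t^{-1}) = \int_\Omega\Bigl[\epsilon\,\nabla u(y)^T M_t(y)M_t(y)^T\nabla u(y) + \frac{W_\delta(u(y))}{\epsilon}\Bigr]\,\lvert\det D\Phi_t(y)\rvert\,dy.
\]
Using $\dot M_0 = -DU$, $\tfrac{d}{dt}\big|_{t=0}\lvert\det D\Phi_t\rvert = \operatorname{div}U$, and the tensor symmetry $\nabla u^T(DU)^T\nabla u = \nabla u^T(DU)\nabla u$, differentiation at $t=0$ yields
\[
    \delta J(u)[\textup{Id},U] = \int_\Omega\bigl[-2\epsilon\,\nabla u^T(DU)\nabla u + e_\epsilon(u)\operatorname{div}U\bigr]\,dy = -\int_\Omega T(u):DU\,dy,
\]
with $T(u)$ exactly as in \eqref{eq: def of T}.

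For the integration by parts, observe that $u\equiv\pm 1$ outside $\{|u|<1\}$ (so $\nabla u = 0$) and $W_\delta(u) = 0$ there, hence the integrand is supported in $\overline{\{|u|<1\}}$ and I may integrate on $\Omega\cap\{|u|<1\}$. A direct index computation, using $\sum_j\partial_j(u_iu_j) = \tfrac12\partial_i\lvert\nabla u\rvert^2 + u_i\Delta u$ and $\partial_i e_\epsilon(u) = \epsilon\,\partial_i\lvert\nabla u\rvert^2 + \tfrac{1}{\epsilon}W_\delta'(u)\,u_i$, collapses most terms and gives the clean identity
\[
    \operatorname{div}T(u) = \Bigl(2\epsilon\Delta u - \tfrac{1}{\epsilon}W_\delta'(u)\Bigr)\nabla u,
\]
which provides the interior integrand in \eqref{eq: first inner variation formula}. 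The divergence theorem then deposits $\mathbf{n}^T T(u)U$ on $\partial(\Omega\cap\{|u|<1\})$; since $U$ is compactly supported in $\Omega$, the contribution on $\partial\Omega$ drops out, leaving the free boundary portion.

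The main obstacle is the endpoint $\delta=0$, where $W_0 = \chi_{(-1,1)}$ is only BV in $u$ and $W_0'(u)$ is meaningless classically. In that case I would handle $\int W_0(u)/\epsilon\,dx = \lvert\{|u|<1\}\rvert/\epsilon$ directly: its inner variation is $\tfrac{d}{dt}\big|_0\lvert\Phi_t(\{|u|<1\})\rvert/\epsilon = \tfrac{1}{\epsilon}\int_{\{|u|<1\}}\operatorname{div}U$, which by the divergence theorem is precisely the contribution of the $-\tfrac{1}{\epsilon}I$ jump of $e_\epsilon(u)$ across the free boundary through the $-e_\epsilon(u)I$ piece of $T(u)$. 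Thus the interior $W_\delta'$ term vanishes and the full boundary trace picks up the $1/\epsilon$ stress, recovering the free boundary condition. A minor book-keeping point is to check that the Hilbert-manifold exponential $\exp_\Phi(tU)$ from \cite{omori1970group,shkoller2000analysis} agrees at first order in $t$ with the ordinary flow of $U$, which is standard and ensures the derivative above is the intrinsic one on $T_\Phi\mathcal{D}$.
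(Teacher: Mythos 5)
Your proposal is correct and follows essentially the same route as the paper's proof: pull back by the flow $\Phi_t=\exp_{\Phi}(tU)$, expand $D\Phi_t^{-1}$ and $\det D\Phi_t$ to first order to get $\delta J(u)[\Phi,U]=-\int T(u_\Phi):\nabla U$, use that $T(u_\Phi)$ is supported in $\{|u_\Phi|<1\}$ to restrict the domain, and integrate by parts with the index computation $\textup{div}\,T(u)=\bigl(2\epsilon\Delta u-\tfrac{1}{\epsilon}W_\delta'(u)\bigr)\nabla u$. Your additional care at the endpoint $\delta=0$ (treating the variation of $|\{|u|<1\}|/\epsilon$ directly, since $W_0'$ is not classical) and the first-order identification of $\exp_\Phi(tU)$ with the ordinary flow of $U$ are refinements the paper passes over by handling $W_\delta'$ formally, but the core argument is the same.
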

\begin{proof}
It is enough to show the formula when $\Phi(0)=\textup{Id}$. We denote
\[
    u_t:=u\circ(\Phi(t))^{-1},
\]
where $\Phi:=\exp_\textup{Id}(tU)$. Note that the identity 
\[
    u_t\circ \Phi(t)=u,
\]
for all admissible $t$. Differentiate in $t$ and set $t=0$, we obtain (recall that $\partial_t\Phi(t)|_{t=0}=U$)
\begin{equation}\label{eq: parabolic via inner variation 1}
        0=\partial_tu_t(x)|_{t=0}+\nabla u(x)\cdot U(x).
\end{equation}
Now we compute the first inner variation of the energy. Recall
\[
    J(u)[\Phi(t)]=J(u_t)=\int_{\Omega_t}\epsilon|\nabla u_t|^2+\frac{W_\delta(u_t)}{\epsilon}\,dx.
\]
Pull back $\Omega_t$ via $x=\Phi_t(y)$. Then 
\[
    J(u_t)=\int_\Omega\left( \epsilon|\nabla u_t(\Phi_t(x))|^2+\frac{W_\delta(u_t(\Phi_t(y)))}{\epsilon}\right)\det D\Phi_t(y)\,dy.
\]
By the definition of $u_t$, we have $W_\delta(u_t(\Phi_t(y)))=W_\delta(u(y))$. On the other hand, (here $D\Phi_t(y)^{-T}$ is the abbreviation of $(D\Phi_t(y)^T)^{-1}$)
\[
    \nabla u_t(\Phi_t(y))=D\Phi_t(y)^{-T}\nabla u(y).
\]
Thus
\begin{equation}\label{eq: energy of inner variation}
    J(u_t)=\int_\Omega \left(\epsilon|D\Phi_t^{-T}(y)\nabla u(y)|^2+\frac{W_\delta(u(y))}{\epsilon}\right)\det D\Phi_t(y)\,dy.
\end{equation}
We can write
\[
    D\Phi_T^{-1}=I-t\nabla U+o(t),\quad D\Phi_t^{-T}=I-t(\nabla U)^T+o(t),
\]
and
\[
    \det D\Phi_t=1+t\hspace{1mm}\textup{div}U+o(t).
\]
Therefore
\begin{align*}
    &\left(\epsilon|D\Phi_t^{-T}\nabla u|^2+\frac{W_\delta(u)}{\epsilon}\right)\det D\Phi_t\\
    &=\left(e_\epsilon(u)-t\epsilon\nabla u^T\nabla U\nabla u+o(t)\right)(1+t\textup{div}U+o(t)).
\end{align*}
Substitute this back into \eqref{eq: energy of inner variation}, we obtain
\[
    \frac{d}{dt}J(u_t)\bigg|_{t=0}=\int_\Omega e_\epsilon(u)\textup{div}U-2\epsilon\nabla u^T\nabla U\nabla u\,dx.
\]
Then from the definition \eqref{eq: def of T},
\[
    T(u):\nabla U=2\epsilon\nabla u^T\nabla U\nabla u-e_\epsilon(u)\textup{div}U.
\]
Thus the first inner variation formula is given by
\[
    \delta J(u)[\Phi(0),U]=\frac{d}{dt}J(u((\exp_{\textup{Id}}(tU))^{-1}))\bigg|_{t=0}=\frac{d}{dt}J(u_t)\bigg|_{t=0}=-\int_{\Omega\cap\{|u|<1\}} T(u):\nabla U\,dx.
\]
Note that we could rewrite the integration domain in the last equality to $\Omega\cap \{|u|<1\}$, because $T(u)$ is supported in $\Omega\cap\{|u|<1\}$ (outside of $\{|u|<1\}$, the gradient $|\nabla u|$ and the energy density $e_\epsilon$ vanish). Using the identity
\[
    T_{ij}\partial_jU_i=\partial_j(T_{ij}U_i)-(\partial_jT_{ij})U_i
\]
and the divergence theorem, we have the integration by parts formula and obtain
\begin{equation}\label{eq: inner first var formula proof}
    \delta J(u)[U]=\int_{\Omega\cap\{|u|<1\}} (\textup{div}T(u))\cdot U\,dx+\int_{\partial(\Omega\cap\{|u|<1\})}\mathbf{n}^T\textup{tr}_{\partial(\Omega\cap\{|u|<1\})}(T(u)U)\,d\mathcal{H}^{n-1},
\end{equation}
where $\mathbf{n}$ is the outward unit normal vector on $\partial(\Omega\cap\{|u|<1\})$, and $\textup{tr}_{\partial(\Omega\cap\{|u|<1\})}(T(u)U)$ is the trace of $T(u)U$ on $\partial(\Omega\cap\{|u|<1\})$. We compute that
\begin{align*}
    (\textup{div} T)_i&=\partial_jT_{ij}\\
    &=2\epsilon\partial_j(\partial_iu\partial_ju)-\partial_j\left(\epsilon|\nabla u|^2+\frac{W_\delta(u)}{\epsilon}\right)\delta_{ij}\\
    &=2\epsilon\partial_{ij}u\partial_ju+2\epsilon\partial_iu\Delta u-\partial_i\left(\epsilon|\nabla u|^2+\frac{W_\delta(u)}{\epsilon}\right)\\
    &=2\epsilon\partial_{ij}u\partial_ju+2\epsilon\partial_iu\Delta u-2\epsilon\partial_ku\partial_{ki}u-\frac{1}{\epsilon}W_\delta'(u)\partial_iu\\
    &=2\epsilon\partial_iu\Delta u-\frac{1}{\epsilon}W_\delta'(u)\partial_i u.
\end{align*}
Substitute in \eqref{eq: inner first var formula proof}, we complete the proof.
\end{proof}

\begin{definition}[Inner gradient flow]\label{def: innergradientflow}
    We say that $u(\cdot,t)=u\circ(\Phi(t))^{-1}$ is an inner gradient flow of $J$, if $\Phi(t):[0,T)\rightarrow\mathcal{D}$ and $V(t):=\partial_t\Phi(t)\in T_{\Phi(t)}\mathcal{D}$ satisfies
    \[
        \langle V(t),U\rangle_{L^2}:=\int_\Omega V(t)\cdot U=\delta J(u(\cdot,0))[\Phi(t),U],\quad\forall U\in T_{\Phi(t)}\mathcal{D},
    \]
    for all $t\in [0,T)$.
\end{definition}

\begin{proof}[Proof of Theorem \ref{thm: inner grad flow}]

Suppose $u(\cdot,t)$ is an inner gradient flow of $J$. For the notational simplicity, take $t=0$ so $\Phi(0)=\textup{Id}$ and denote $u:=u(\cdot,0)$. Using \eqref{eq: first inner variation formula}, we have
\begin{equation}\label{eq: subgradient expression}
    \begin{split}
        \int_\Omega V\cdot U\,dx&=\int_{\Omega\cap\{|u|<1\}} \left(2\epsilon\Delta u-\frac{1}{\epsilon}W_\delta'(u)\right)\nabla u\cdot U\,dx\\
        &\hspace{4mm}+\int_{\partial(\Omega\cap\{|u|<1\})}\mathbf{n}^T\textup{tr}_{\partial(\Omega\cap\{|u|<1\})}(TU)\,d\mathcal{H}^{n-1}.
    \end{split}
\end{equation}
First, test against all $U\in T_{\Phi}\mathcal{D}$ that are supported in $\{|u|<1\}$. Then by the fundamental lemma of calculus of variations, we get
\begin{equation}\label{eq: identification of V via subgradient}
        V=\left(2\epsilon\Delta u-\frac{1}{\epsilon}W_\delta'(u)\right)\nabla u\quad\text{in}\quad \Omega\cap\{|u|<1\}.
\end{equation}
Then substitute this identity back to \eqref{eq: subgradient expression}, we obtain
\[
    0=\int_{\partial(\Omega\cap\{|u|<1\})}\mathbf{n}^T\textup{tr}_{\partial(\Omega\cap\{|u|<1\})}(TU)\,d\mathcal{H}^{n-1},\quad\forall U\in T_\Phi\mathcal{D}.
\]
Now test with $U$ not necessarily supported in $\{|u|<1\}$, again by the fundamental lemma of calculus of variations, this implies that (because $\mathbf{n}=\pm\nu$ on $\partial\{|u|<1\}$)
\[
\nu^T\textup{tr}_{\partial(\Omega\cap\{|u|<1\})}(T)\equiv0\quad\text{on}\quad\partial\{|u_\Phi|<1\}.
\]
Note that $U\in T_\Phi\mathcal{D}$ is compactly supported in $\Omega$, so we do not force anything on $\partial\Omega$. In particular, in case of the free boundary Allen--Cahn equation $\delta=0$ and $W_\delta=\chi_{(-1,1)}$, we get (recall \eqref{eq: def of T})
\begin{align*}
    0&=\nu^T\textup{tr}_{\partial\{|u|<1\}}(T)\\
    &=\textup{tr}_{\partial\{|u|<1\}}\left(2\epsilon|\nabla u|^2\nu-e_\epsilon(u)\nu\right)\\
    &=\textup{tr}_{\partial\{|u|<1\}}\left(2\epsilon|\nabla u|^2-\epsilon|\nabla u|^2-\epsilon^{-1}\right)\nu
\end{align*}
on $\partial\{|u|<1\}$. That is to say,
\[
    \textup{tr}_{\partial\{|u|<1\}}(|\nabla u|^2)=\epsilon^{-2}\quad\text{on}\quad\partial\{|u|<1\}.
\]
On the other hand, for $\delta>0$ and $W_\delta(u)=(1-u^2)^{\delta}$, we have $\textup{tr}_{\partial\{|u|<1\}}W_\delta(u)\equiv 0$, thus we obtain
\[
    0=\textup{tr}_{\partial\{|u|<1\}}\left(2\epsilon|\nabla u|^2-e_\epsilon(u)\right)=\textup{tr}_{\partial\{|u|<1\}}\left(\epsilon|\nabla u|^2\right)
\]
on $\partial\{|u|<1\}$. Thus
\[
    \textup{tr}_{\partial\{|u|<1\}}(|\nabla u|^2)\equiv 0\quad\text{on}\quad\partial\{|u|<1\}.
\]
In other words, if $u$ is an inner gradient flow of $J$, then we must have the boundary Neumann condition on $\partial\{|u|<1\}$. 

Finally, combine \eqref{eq: parabolic via inner variation 1} and \eqref{eq: identification of V via subgradient}, we recover the parabolic equation
\[
    \partial_tu=2\Delta u-W_\delta'(u)/\epsilon^2\quad\text{in}\quad \Omega\cap\{|u|<1\},
\]
which coincides with the parabolic Allen--Cahn equation $\delta=2$, derived from the usual gradient flow of the energy functional $J$. This completes the proof.
\end{proof}

\begin{remark}
        With suitable assumptions on the initial condition $u_0$, we expect to be able to show the existence of the (weak) gradient flow starting from $u_0$, based on the gradient flow theory on metric spaces (see \cite{ambrosio2005gradient}).
\end{remark}



\begin{remark}
    One may also remove the fixed boundary data of $u$ (that is to say, Dirichlet data on $\partial\Omega$) by redefining the diffeomorphism group $\mathcal{D}$ (see Section \ref{sec: inner gradient flow}) in a way that $\textup{Id}-\Phi$ is not necessarily compactly supported in $\Omega$. In that case, we obtain parabolic equations with the Neumann boundary condition $\nu\cdot\mathbf{n}=0$ on $\partial\Omega$, where $\mathbf{n}$ is a unit normal vector of the boundary $\partial\Omega$. If one uses Allen--Cahn type nonlinearity, this corresponds to the diffused \textit{free boundary mean curvature flow} (see \cite{mizuno2015convergence}, for example). \footnote{Again, here the commonly used term \textit{free boundary} in the context of minimal surface or mean curvature flow denotes the Neumann boundary condition of the surface on $\partial\Omega$, and it differs from the usage of the free boundary in this work.}
\end{remark}

\section{Parabolic equations as forced mean curvature flow}\label{sec: parabolic equation as diffused mean curvature flow}

Consider a classical solution $u\in C_{x,t}^2(\Omega\times[0,T))$ to the nonlinear parabolic equation 
\begin{equation}\label{eq: parabolic equation}
    \partial_tu=\Delta u-f(u),\quad\textrm{in}\quad\Omega.
\end{equation}
In $\{|\nabla u(x)|\neq 0\}$, we define the \textit{normal velocity of level surfaces} by
\begin{equation}\label{eq: parabolic equation and level set flow relation}
    v:=-\frac{\partial_tu}{|\nabla u|}.
\end{equation}

One can see that \eqref{eq: parabolic equation and level set flow relation} is exactly the normal velocity of level sets going through that point (see \cite[Section 1.2]{giga2006surface}). With the definition of surface-normal velocity \eqref{eq: parabolic equation and level set flow relation}, one can easily compute its relation with the mean curvature flow, given that $u$ solves the parabolic equation \eqref{eq: parabolic equation}.

\begin{proposition}
    Let $u$ satisfy \eqref{eq: parabolic equation}. Denote $\phi:=\log(1/|\nabla u|)$. Then in $\{|\nabla u|\neq 0\}$, the normal velocity of level surfaces $v$ defined as \eqref{eq: parabolic equation and level set flow relation} satisfies the relation
    \begin{equation}\label{eq: eq for v}
        v=-H+\partial_\nu\phi+\frac{f(u)}{|\nabla u|}.
    \end{equation}
    Thus each level surface evolves as forced mean curvature flow.
\end{proposition}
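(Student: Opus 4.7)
The plan is to combine the tangential-normal decomposition of the Laplacian on level surfaces (which was already used in the proof of Lemma~2.1) with the defining relation $v=-\partial_t u/|\nabla u|$ and the parabolic equation itself. The key point is that $\phi=\log(1/|\nabla u|)$ is essentially a potential whose normal derivative records $\partial_{\nu\nu}u$, so the entire computation reduces to one substitution.

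First, at a point where $|\nabla u|\neq 0$, the standard decomposition reads
\[
\Delta u=\Delta_\Gamma u+\partial_{\nu\nu}u+H\partial_\nu u,
\]
where $\Gamma$ is the level surface through the point. Since $u$ is constant on $\Gamma$ we have $\Delta_\Gamma u=0$, and since $\nu=\nabla u/|\nabla u|$ we have $\partial_\nu u=|\nabla u|$, so
\[
\Delta u=\partial_{\nu\nu}u+H|\nabla u|.
\]

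Second, I would compute $\partial_\nu\phi$. Differentiating $|\nabla u|^2$ gives $\nabla|\nabla u|^2=2\nabla^2 u\,\nabla u$, hence $\partial_\nu|\nabla u|^2=2|\nabla u|\,\partial_{\nu\nu}u$, which yields $\partial_\nu|\nabla u|=\partial_{\nu\nu}u$. Therefore
\[
\partial_\nu\phi=-\frac{\partial_\nu|\nabla u|}{|\nabla u|}=-\frac{\partial_{\nu\nu}u}{|\nabla u|},
\]
and substituting into the decomposition above gives $\Delta u=-|\nabla u|\,\partial_\nu\phi+H|\nabla u|$.

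Finally, I would feed this into the parabolic equation $\partial_t u=\Delta u-f(u)$ to obtain
\[
\partial_t u=|\nabla u|\bigl(H-\partial_\nu\phi\bigr)-f(u),
\]
and divide by $-|\nabla u|$ to arrive at
\[
v=-\frac{\partial_t u}{|\nabla u|}=-H+\partial_\nu\phi+\frac{f(u)}{|\nabla u|},
\]
which is \eqref{eq: eq for v}. There is no real obstacle here; the only tiny subtlety is the sign bookkeeping, since both $\nu$ and $H$ depend on the choice of orientation through $\nabla u$, and one must check that the convention stated in Section~2.1 (mean curvature of a sphere is positive with outward normal, and $\nu$ points in the direction of increasing $u$) is consistent with the identity $\partial_\nu u=|\nabla u|$ used above. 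With that convention the computation goes through verbatim and the proposition follows.
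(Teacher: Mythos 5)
Your proof is correct and follows essentially the same route as the paper: both split $\Delta u$ into a tangential part equal to $H|\nabla u|$ and the normal part $\partial_{\nu\nu}u=-|\nabla u|\,\partial_\nu\phi$, then divide $\partial_t u=\Delta u-f(u)$ by $-|\nabla u|$ to get \eqref{eq: eq for v}. The only cosmetic difference is that you invoke $\Delta_\Gamma u=0$ (exactly as in the proof of Lemma \ref{lem: HCMF general}) where the paper uses the equivalent identity $\mathrm{tr}(P\nabla^2uP)/|\nabla u|=\mathrm{tr}(\mathbf{A})=H$ from the block form of $\mathbf{C}$, and your explicit check that $\partial_\nu|\nabla u|=\partial_{\nu\nu}u$ just makes precise a step the paper leaves implicit.
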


\begin{proof}
    Note that (see Section \ref{sec: notations and level set flow} for the definition of $\mathbf{C}$, $\mathbf{A}$, and $P$)
    \[
        \frac{\text{tr}(P\nabla^2uP)}{|\nabla u|}=\text{tr}(P\mathbf{C}P)=\text{tr}(\mathbf{A})=H,
    \]
    where $P=I-\nu\otimes\nu$ is the tangential projection. Then from the parabolic equation \eqref{eq: parabolic equation} and the definition of $v$, we have
        \begin{align*}
            v&=-\frac{1}{|\nabla u|}(\Delta u-f(u))\\
            &=-\frac{\text{tr}(P\nabla^2uP)}{|\nabla u|}-\frac{\partial_{\nu\nu}u}{|\nabla u|}+\frac{f}{|\nabla u|}\\
            &=-H+\partial_\nu\log(1/|\nabla u|)+\frac{f}{|\nabla u|}\\
            &=-H+\partial_\nu\phi+\frac{f}{|\nabla u|}.
        \end{align*}
     This completes the proof.
\end{proof}

Moreover, there holds the parabolic version of the key elliptic equation
\[
    \Delta\phi=H^2-|\mathbf{A}|^2
\]
in \cite{an2025second}. Here we provide a proof based on the direct computation; however, on a general Riemannian manifold setting, we find that the proof as in \cite{an2025second} (based on the level set flow framework in Section \ref{sec: level set flow framework}) can be more straightforward. 

\begin{lemma}
    Let $u$ satisfy \eqref{eq: parabolic equation}. Denote $\phi:=\log(1/|\nabla u|)$. Then
    \begin{equation}
        \begin{split}\label{eq: evolution of phi with nonlinearity}
            \partial_t\phi&=\Delta\phi+|\mathbf{A}|^2-(\partial_\nu\phi)^2+f'(u)
        \end{split}
    \end{equation}
    In particular, if $f\equiv 0$,
    \begin{equation}\label{eq: evolution of phi free boundary}
        \partial_t\phi=\Delta\phi+|\mathbf{A}|^2-(\partial_\nu\phi)^2.
    \end{equation}
\end{lemma}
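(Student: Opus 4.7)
The plan is to derive the two sides of \eqref{eq: evolution of phi with nonlinearity} by direct calculation in Euclidean coordinates, using the parabolic equation \eqref{eq: parabolic equation} for $\partial_t u$ and the standard Bochner-type identity for $\Delta|\nabla u|^2$, and then to identify the resulting curvature terms via the block-form decomposition \eqref{eq: block form C}--\eqref{eq: block form B} set up in Section \ref{sec: notations and level set flow}. The main obstacle is conceptual rather than computational: one must correctly match the ``full Hessian'' quantities $|\mathbf{C}|^2$ and $|\mathbf{C}\nu|^2$ that appear from raw differentiation with the intrinsic quantities $|\mathbf{A}|^2$ and $\partial_\nu\phi$ that appear in the claimed identity.

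First I would compute $\partial_t\phi$. Since $\phi=-\tfrac{1}{2}\log|\nabla u|^{2}$, differentiating in $t$ and then using $\partial_t u=\Delta u-f(u)$ gives
\[
\partial_t\phi=-\frac{\nabla u\cdot\nabla\partial_t u}{|\nabla u|^{2}}=-\frac{\nabla u\cdot\nabla\Delta u}{|\nabla u|^{2}}+f'(u),
\]
where the $f'(u)$ comes from $\nabla f(u)=f'(u)\nabla u$ and $\nabla u\cdot\nabla u/|\nabla u|^2=1$. Next I would compute $\Delta\phi=-\tfrac{1}{2}\Delta\log|\nabla u|^{2}$ using the formula $\Delta\log g=\Delta g/g-|\nabla g|^{2}/g^{2}$ with $g=|\nabla u|^{2}$ and the Bochner identity $\Delta|\nabla u|^{2}=2|\nabla^{2}u|^{2}+2\nabla u\cdot\nabla\Delta u$. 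Noting that $\nabla|\nabla u|^{2}=2(\nabla^{2}u)\nabla u$, and writing $\mathbf{C}=\nabla^{2}u/|\nabla u|$, this rearranges to
\[
\Delta\phi=-|\mathbf{C}|^{2}+2|\mathbf{C}\nu|^{2}-\frac{\nabla u\cdot\nabla\Delta u}{|\nabla u|^{2}}=-|\mathbf{C}|^{2}+2|\mathbf{C}\nu|^{2}+\partial_t\phi-f'(u).
\]

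It remains to show $|\mathbf{C}|^{2}-2|\mathbf{C}\nu|^{2}=|\mathbf{A}|^{2}-(\partial_\nu\phi)^{2}$. Choosing the adapted frame in which $\nu=e_n$ and using the block form \eqref{eq: block form C}, the symmetric matrix $\mathbf{C}$ has tangential block $\mathbf{A}$, off-diagonal column $-\nabla_T\phi$, and corner entry $\mathbf{C}_{nn}=-\partial_\nu\phi$ (the last equality following from $\nabla\phi=-\mathbf{C}\nu$, which is the relation behind \eqref{eq: block form B}). Squaring entrywise yields
\[
|\mathbf{C}|^{2}=|\mathbf{A}|^{2}+2|\nabla_T\phi|^{2}+(\partial_\nu\phi)^{2},\qquad |\mathbf{C}\nu|^{2}=|\nabla_T\phi|^{2}+(\partial_\nu\phi)^{2},
\]
and the cancellation $|\mathbf{C}|^{2}-2|\mathbf{C}\nu|^{2}=|\mathbf{A}|^{2}-(\partial_\nu\phi)^{2}$ drops out. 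Substituting this into the expression for $\Delta\phi$ and solving for $\partial_t\phi$ delivers \eqref{eq: evolution of phi with nonlinearity}. Setting $f\equiv0$ gives \eqref{eq: evolution of phi free boundary} as a direct corollary, completing the proof.
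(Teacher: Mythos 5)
Your proposal is correct and follows essentially the same route as the paper: compute $\partial_t\phi$ from the PDE, compute $\Delta\phi$ via the product/Bochner identity to obtain $-|\mathbf{C}|^2 + 2|\mathbf{C}\nu|^2 - \nabla u\cdot\nabla\Delta u/|\nabla u|^2$, and then reconcile the full-Hessian quantities with the intrinsic ones via the block decomposition \eqref{eq: block form C}. The only cosmetic difference is that the paper writes $2|\nabla\phi|^2$ where you write $2|\mathbf{C}\nu|^2$, which are the same object since $\nabla\phi=-\mathbf{C}\nu$.
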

\begin{proof}
    By a direct computation,
    \[
            \partial_t\phi=\partial_t\log\frac{1}{|\nabla u|}=-\frac{\nabla u}{|\nabla u|^2}\cdot\left(\nabla\partial_tu\right)=-\frac{\nabla u\cdot\nabla\Delta u}{|\nabla u|^2}+f'(u),
    \]
    because $\partial_tu=\Delta u-f(u)$. On the other hand,
    \begin{align*}
        \Delta\phi&=-\frac{\nabla u\cdot\nabla\Delta u}{|\nabla u|^2}-\frac{|\nabla^2u|^2}{|\nabla u|^2}+2\frac{|\nabla^2 u\nabla u|^2}{|\nabla u|^4}\\
        &=-\frac{\nabla u\cdot\nabla\Delta u}{|\nabla u|^2}-|\mathbf{C}|^2+2|\nabla\phi|^2.
    \end{align*}
    Since $|\mathbf{C}|^2-2|\nabla \phi|^2=|\mathbf{A}|^2-(\partial_\nu\phi)^2$ (see the block form expression \ref{eq: block form C}), we have
    \[
        \partial_t\phi=\Delta\phi+|\mathbf{A}|^2-(\partial_\nu\phi)^2+f'(u).
    \]
    This completes the proof. 
\end{proof}

\section{Parabolic estimates for the free boundary Allen--Cahn}\label{sec: Parabolic estimates for the free boundary Allen--Cahn}

Now we repeat the $\epsilon$-uniform $C^{1,1}$-to-$C^{2,\alpha}$ argument in \cite{an2025second} in the parabolic setting. In this section, we always denote $u$ as in Theorem \ref{thm: main theorem}. Thus $u$ denotes the solution of the parabolic free boundary Allen--Cahn equation in $\Omega=B_1(0)$, i.e. for all $t\in [0,T)$,
\begin{equation}\label{eq: PFBAC eq proof part}
    \left\{
        \begin{alignedat}{2}
            \partial_tu(\cdot,t)&=\Delta u(\cdot,t)\quad&&\text{in}\quad\{|u(\cdot,t)|<1\}\subset B_1\\
            |\nabla u(\cdot,t)|&=1/\epsilon\quad&&\text{on}\quad\partial\{|u(\cdot,t)|<1\}.
        \end{alignedat}
    \right.
\end{equation}
Note that from Section \ref{sec: inner gradient flow}, if one uses the indicator potential $\chi_{(-1,1)}(u)$ without a multiplicative constant, then the resulting parabolic flow must be $\partial_tu=2\Delta u$. For the notational simplicity, we use $\partial_tu=\Delta u$ instead, which does not change the essence of the argument.

We recall that the initial condition $u_0:=u(\cdot,0)$ satisfies the free boundary condition $|\nabla u_0|=1/\epsilon$ on $\partial\{|u_0|<1\}$, and
\[
    \|\partial_\nu\phi_0\|_{C^\alpha(\{|u_0|<1\})}\leq C\epsilon^{1-\alpha}\eta\max\{\epsilon,\eta\},
\]
where $\phi_0:=\log(1/|\nabla u_0|)$. Such an initial function $u_0$ can be constructed from a signed distance function from any $C^{1,1}$ surface with small enough $\epsilon>0$ (see Remark \ref{rmk: signed distance initial condition}).

Moreover, we recall the notational abbreviation $\{|u|<1\}:=\{(x,t)\in B_1\times[0,T):|u(x,t)|<1\}$, and 
\[
    \partial \{|u|<1\}:=\{|u_0|<1\}\cup\left(\cup_{t\in [0,T)}\partial\{|u(\cdot,t)|<1\}\right).
\]
Note that $\partial\{|u|<1\}$ is the parabolic boundary of $\{|u|<1\}$.

\begin{lemma}[naive gradient estimate]\label{lem: naive gradient estiamtes}
    Suppose uniform bounds
    \[
        \|\mathbf{A}\|_{L^\infty(\{|u|<1\})},\|v\|_{L^\infty(\{|u|<1\})}\leq \eta,
    \]
    with $\eta<1/2$. Then there exists $C=C(n)>0$ such that 
    \begin{equation}\label{eq: naive estimate grad u}
        \||\nabla u|-\epsilon^{-1}\|_{L^\infty(\{|u|<1\})}\leq C\eta.
    \end{equation}
    As a consequence, there exists $c=c(n)>0$ such that if $\epsilon\eta<c(n)$, then
    \begin{equation}\label{eq: naive estimate sigma}
        \|\phi-\log\epsilon\|_{L^\infty(\{|u|<1\})}\leq C\epsilon\eta.
    \end{equation}
\end{lemma}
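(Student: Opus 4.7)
The plan is to analyze the level set flow $F(\cdot,\tau)$ from Section \ref{sec: level set flow framework} on each fixed time slice, and derive a simple ODE for $\sigma=1/|\nabla u\circ F|$ whose initial value is prescribed by the free boundary condition $|\nabla u|=1/\epsilon$. For each $t\in[0,T)$, I start the flow from the boundary component $\{u(\cdot,t)=-1\}$, where $\sigma\equiv\epsilon$. As $\tau$ sweeps over $[-1,1]$, the flow covers every level surface of $u(\cdot,t)$ in $\{|u(\cdot,t)|<1\}$, so an $L^\infty$ bound on $\sigma-\epsilon$ along the flow translates into the global bound \eqref{eq: naive estimate grad u}.

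The key computation is to combine Lemma \ref{lem: HCMF general} with the parabolic equation. From $\partial_tu=\Delta u$ and $v=-\partial_tu/|\nabla u|$, one has $\sigma\,\Delta u\circ F=-v$, so the evolution of Lemma \ref{lem: HCMF general} collapses into the linear ODE
\[
    \frac{d}{d\tau}\left(\frac{1}{\sigma}\right)=-(H+v).
\]
The assumptions $\|\mathbf{A}\|_{L^\infty},\|v\|_{L^\infty}\leq\eta$ give $|H+v|\leq C(n)\eta$, and integrating from $\tau=-1$ with initial value $1/\sigma=1/\epsilon$ yields
\[
    \bigl||\nabla u|-\epsilon^{-1}\bigr|=\left|\frac{1}{\sigma}-\frac{1}{\epsilon}\right|\leq\int_{-1}^{\tau}|H+v|\,d\tau'\leq 2C(n)\eta
\]
for all $\tau\in[-1,1]$, which is exactly \eqref{eq: naive estimate grad u}.

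For \eqref{eq: naive estimate sigma}, choosing $c(n)$ small enough that $2C(n)\epsilon\eta\leq 1/2$ forces $1/\sigma\in[1/(2\epsilon),2/\epsilon]$, hence $\sigma/\epsilon\in[1/2,2]$. Rewriting $|\sigma-\epsilon|=\epsilon\sigma\,|1/\epsilon-1/\sigma|\leq C(n)\epsilon^2\eta$ gives $|\sigma/\epsilon-1|\leq C(n)\epsilon\eta$; combining this with the identity $\phi-\log\epsilon=\log(\sigma/\epsilon)$ and the elementary bound $|\log(1+x)|\leq 2|x|$ for $|x|\leq 1/2$ produces \eqref{eq: naive estimate sigma}.

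The one place where care is needed is the global existence of the flow $F(x,\cdot)$ on $[-1,1]$, which requires $|\nabla u|>0$ throughout $\{|u|<1\}$. Near the free boundary, continuity of $|\nabla u|$ makes $F$ locally well-defined; the integral estimate for $1/\sigma$ above then keeps $1/\sigma$ bounded away from $0$, so the flow can be continued, and a standard continuity/bootstrap argument closes the loop on $[-1,1]$. I do not foresee any further technical obstacle: once the reduction to a linear ODE for $1/\sigma$ along $F$ is in place, both estimates follow by elementary integration and a small Taylor expansion.
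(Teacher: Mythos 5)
Your proof is correct and follows essentially the same route as the paper: reduce to the ODE from Lemma \ref{lem: HCMF general}, identify $\sigma\Delta u\circ F=-v$ via the parabolic equation, bound $|H+v|\leq C(n)\eta$, integrate from the free boundary where $\sigma=\epsilon$, and then convert the $1/\sigma$ estimate to an estimate for $\sigma$ and $\phi=\log\sigma$. Your remark about the global existence of $F$ on $[-1,1]$ via a continuity/bootstrap argument addresses a technical point the paper leaves implicit, but the substance is identical.
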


\begin{proof}
    From the definition of $v$, it directly follows that $\frac{|\Delta u|}{|\nabla u|}=|v|\leq \eta$ in $\{|u|<1\}$. Moreover, $|H|^2=|\textup{tr}(\mathbf{A})|^2\leq n\eta^2$. Therefore, \eqref{eq: naive estimate grad u} follows from estimating the ODE (recall $\sigma=\frac{1}{|\nabla u|}$ and \eqref{eq:HMCF general})
    \[
        \left|\frac{d}{d\tau}\frac{1}{\sigma}\right|=|H-\sigma\Delta u|\leq C(n)\eta
    \]
    with the initial condition $\sigma(\pm 1,t)=\epsilon$. Furthermore, if $\epsilon\eta\leq\frac{1}{2C(n)}$, then we obtain $\frac{1}{\sigma}\geq \frac{1}{\epsilon}-C(n)\eta\geq \frac{1}{2\epsilon}$, thus $\sigma\leq 2\epsilon$. Therefore,
    \[
        |\sigma-\epsilon|= \epsilon\sigma\left||\nabla u|-\epsilon^{-1}\right|\leq C(n)\epsilon^2\eta.
    \]
    Use the fact that $\phi:=-\log|\nabla u|=\log\sigma$, this completes the proof.
\end{proof}

\begin{remark}
    In \eqref{eq: naive estimate sigma}, to be completely rigorous, one needs to restrict the domain to the slightly smaller ball $B_{1-C\epsilon\eta}\cap\{|u|<1\}$, because for some $x\in \{u=\tau\}$ very close to $\partial B_1$, it is possible that the level set flow $F(x,\tau)$ (defined in Section \ref{sec: level set flow framework}) escapes to the boundary of the domain $\partial B_1$, before it touches the free boundary $\tau=\pm1$. However, this $\epsilon\eta$ adjustment is scale invariant, and we are assuming small $\epsilon\eta$. Therefore, to reduce the unnecessary technicality we ignore this adjustment.
\end{remark}


\begin{lemma}[Improved gradient estimate]
    Let $u$ as in Theorem \ref{thm: main theorem}. If we denote $\phi:=\log (1/|\nabla u|)$, then we have
    \begin{equation}\label{eq: gradient improved estimate}
        \|\phi-\log \epsilon\|_{L^\infty(\{|u|<1\}\cap B_{3/4})}\leq C\epsilon^2\eta\max\{\epsilon,\eta\},
    \end{equation}
    for some $C=C(n)>0$. 
\end{lemma}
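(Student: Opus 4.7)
The plan is to view $\psi := \phi - \log\epsilon$ as the solution of a caloric equation with a small source and lateral Dirichlet data, then apply the parabolic maximum principle against an explicit super-caloric barrier built from the caloric function $1-u^2$.

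By \eqref{eq: evolution of phi free boundary},
\[
    (\partial_t - \Delta)\psi = |\mathbf{A}|^2 - (\partial_\nu\phi)^2 =: g\quad\text{in}\quad\{|u|<1\}.
\]
The hypothesis $\|\mathbf{A}\|_{L^\infty}\leq\eta$ and the identity \eqref{eq: eq for v} (with $f\equiv 0$), which gives $|\partial_\nu\phi| = |v+H|\leq(1+\sqrt n)\eta$, combine to yield $|g|\leq C(n)\eta^2$. On the lateral free boundary $\{|u|=1\}$ the condition $|\nabla u|=1/\epsilon$ forces $\psi\equiv 0$. At $t=0$, Lemma \ref{lem: naive gradient estiamtes} implies $\sigma_0 = 1/|\nabla u_0|\leq 2\epsilon$, so every point of $\{|u_0|<1\}$ can be joined to $\{|u_0|=1\}$ by a level-set flow arc of Euclidean length $\leq 4\epsilon$; combined with the initial hypothesis on $\|\nabla\phi_0\|_{L^\infty}$ this yields $|\psi_0| \leq C(n)\epsilon^2\eta\max\{\epsilon,\eta\}$.

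The main obstruction is that on the Euclidean lateral boundary $\partial B_1\cap\{|u|<1\}$ only the naive bound $|\psi|\leq C(n)\epsilon\eta$ of Lemma \ref{lem: naive gradient estiamtes} is available, which is too weak by a factor of $\max\{\epsilon,\eta\}$ to propagate unchanged into $B_{3/4}$. To localize I would use a barrier
\[
    w(x,t) := C_1\epsilon^2\eta\max\{\epsilon,\eta\} + C_2\epsilon^2\eta^2\bigl(1-u(x,t)^2\bigr) + C_3\epsilon\eta\,h(x),
\]
where $h$ is a smooth radial cutoff with $h\equiv 0$ on $B_{3/4}$, $h\equiv 1$ outside $B_{7/8}$, and $|\Delta h|\leq C(n)$. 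Because $u$ is caloric and $|\nabla u|\geq 1/(2\epsilon)$ by the naive estimate,
\[
    (\partial_t - \Delta)w = 2C_2\epsilon^2\eta^2|\nabla u|^2 - C_3\epsilon\eta\,\Delta h \geq \tfrac{1}{4}C_2\eta^2 - C(n)C_3\epsilon\eta.
\]
Fixing $C_3 = C(n)$ of order $1$ (so that $w\geq |\psi|$ on $\partial B_1\cap\{|u|<1\}$) and $C_2 = C(n)(1 + \epsilon/\eta)$ makes $(\partial_t-\Delta)w \geq |g|$, while $C_1$ is chosen to guarantee $w(\cdot,0)\geq|\psi_0|$ and $w\geq 0$ on $\{|u|=1\}$.

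The parabolic maximum principle applied to $\pm\psi - w$ on $\{|u|<1\}\cap B_1$ yields $|\psi|\leq w$, and since $h\equiv 0$ and $1-u^2\leq 1$ on $B_{3/4}$,
\[
    |\psi| \leq C_1\epsilon^2\eta\max\{\epsilon,\eta\} + C_2\epsilon^2\eta^2\quad\text{on}\quad\{|u|<1\}\cap B_{3/4}.
\]
A case split confirms $C_2\epsilon^2\eta^2\leq C(n)\epsilon^2\eta\max\{\epsilon,\eta\}$ in both regimes $\eta\geq\epsilon$ (so $C_2=O(1)$) and $\eta<\epsilon$ (so $C_2\eta = O(\epsilon)$), establishing \eqref{eq: gradient improved estimate}. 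The quantitatively delicate step is exactly this matching: the cost $C_3\epsilon\eta$ of the Euclidean cutoff and the strength $C_2\eta^2$ of the Allen--Cahn caloric barrier balance only at $C_2\sim 1+\epsilon/\eta$, which is precisely the dependence that keeps the final constant $n$-dependent.
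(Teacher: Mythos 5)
Your proposal is correct, and it runs on the same engine as the paper's proof --- the parabolic comparison principle for \eqref{eq: evolution of phi free boundary} with a supersolution built on the caloric function $1-u^2$, using $(\partial_t-\Delta)(1-u^2)=2|\nabla u|^2\geq \tfrac{1}{2}\epsilon^{-2}$ from \eqref{eq: naive estimate grad u} and the bound $|\partial_\nu\phi|=|v+H|\leq C(n)\eta$ from \eqref{eq: direct equation of v H phi} --- but the localization device is genuinely different. The paper localizes with a moving quadratic $C_x|x-G(t)|^2$ centered on a trajectory $G(t)$ that follows a level-surface point with velocity $v\nu$; since that term vanishes at the point of interest, the sharp bound $C\epsilon^2\eta\max\{\epsilon,\eta\}$ is read off directly at $G(t)$, at the price of a drift estimate using $\|v\|_{L^\infty}\leq\eta$ and a sweep over all starting points in $B_{5/6}\cap\{|u(\cdot,0)|<1\}$. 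You instead use a static cutoff $C_3\epsilon\eta\,h(x)$ calibrated to the naive bound \eqref{eq: naive estimate sigma} on $\partial B_1$, and absorb its heat-operator cost $C(n)\epsilon\eta$ by inflating the $(1-u^2)$ coefficient to $C_2\epsilon^2\eta^2$ with $C_2\sim 1+\epsilon/\eta$; your case split over $\eta\geq\epsilon$ and $\eta<\epsilon$ correctly recovers $\epsilon^2\eta\max\{\epsilon,\eta\}$, which is exactly the size of the paper's coefficient $C_\tau\epsilon^2$. Your route avoids the moving point, the velocity-drift computation and the trajectory sweep, gives the bound on all of $B_{3/4}$ at once, and makes explicit the initial-slice comparison (via the $4\epsilon$ level-set arc together with \eqref{eq: initial condition}), a parabolic-boundary check the paper leaves implicit; the paper's moving-center barrier in turn dispenses with the $\epsilon/\eta$ bookkeeping, since the localization penalty $3nC_x$ is dominated wholesale by $C_\tau\sim\eta\max\{\epsilon,\eta\}$. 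The only caveat, shared with the paper and already flagged in its remark after the naive gradient estimate, concerns points within distance $O(\epsilon)$ of $\partial B_1$ where the level-set arc may exit the ball; in your setup these points lie in the region where $h\equiv 1$ once $\epsilon$ is below a dimensional threshold, so nothing is lost.
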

\begin{proof}
    First of all, from \eqref{eq: direct equation of v H phi} and the bounds $\|\mathbf{A}\|_{L^\infty(\{|u|<1\})}\leq \eta$ and $\|v\|_{L^\infty(\{|u|<1\})}\leq \eta$, we have immediate estimate $|\partial_\nu\phi|=|H-v|\leq C(n)\eta$.

    Without loss of generality, we assume that $u_0(0)=0$. $\phi-\log\epsilon$ solves the following parabolic problem: from \eqref{eq: evolution of phi free boundary}, the free boundary condition $|\nabla u(\cdot,t)|=1/\epsilon$ on $\partial\{|u(\cdot,t)|<1\}$, and \eqref{eq: naive estimate sigma},
    \begin{equation}
        \left\{
            \begin{alignedat}{2}
                (\partial_t-\Delta) (\phi-\log\epsilon)&=|\mathbf{A}|^2-(\partial_\nu\phi)^2\quad&&\text{in}\quad\{|u|<1\},\\
                \phi(\cdot,t)-\log\epsilon&=0\quad&&\text{on}\quad\partial\{|u(\cdot,t)|<1\},\quad\forall t\in [0,T),\\
                |\phi-\log\epsilon|&\leq C\epsilon\eta\quad&&\text{on}\quad \overline{\{|u|<1\}}.
            \end{alignedat}
        \right.
    \end{equation}
    In particular, from the $C^{1,1}$ uniform bound on level surfaces $|\mathbf{A}|\leq \eta$ and \eqref{eq: naive estimate grad u}, we may take $C_\phi>0$ such that
    \[
        |\phi-\log\epsilon|\leq C_\phi\epsilon\eta\quad\text{in}\quad\{|u|<1\}
    \]
    and
    \[
    |(\partial_t-\Delta) (\phi-\log\epsilon)|=||\mathbf{A}|^2-(\partial_\nu\phi)^2|\leq C_\phi \eta^2\quad\text{in}\quad \{|u|<1\}.
    \]
    We define the parabolic supersolution by
    \[
        \Phi:=C_\tau\epsilon^2(1-u^2)+C_x|x-G(t)|^2,
    \]
    where $C_\tau,C_x>0$ are constants to be chosen later, and $G(t)$ is a displacement of $(0,0)\in B_1\times[0,T)$, after time $t$, following the geometric evolution of the zero level surface. That is to say, $G(0)=0$ and $\frac{d}{dt}G(t)=(v\circ G(t))(\nu\circ G(t))$. We then compute
    \[
        (\partial_t-\Delta)(1-u^2)=2|\nabla u|^2,
    \]
    and in $\{|u|<1\}$ (we use \eqref{eq: eq for v}),
    \[
    |\partial_t|x-G(t)|^2|=2|(v\cdot G(t))(G(t)-x)\cdot (\nu\circ G)|\leq 4\eta,\quad\forall x\in B_1,
    \]
    thus
    \[
        (\partial_t-\Delta)|x-G(t)|^2\geq -4\eta-2n>-3n\quad\text{in}\quad \{|u|<1\},
    \]
    because $\eta<1/2$ and $n\geq 2$. Therefore, we estimate 
    \[
        (\partial_t-\Delta)\Phi\geq -3nC_x+2C_\tau \epsilon^2 |\nabla u|^2\quad\text{in}\quad\{|u|<1\}.
    \]
    Now, let $C_\tau=C_\phi \eta\max\{\epsilon,\eta\}$ and $C_x=28n^{-1}C_\phi\max\{\epsilon,\eta\}\eta$. With these choices, we find that (note that $|\nabla u|$ is $\eta$ close to $\epsilon^{-1}$ due to \eqref{eq: naive estimate grad u}, and we assumed $\epsilon\eta<1/2$)
    \[
        (\partial_t -\Delta)\Phi\geq -6C_\phi\max\{\epsilon,\eta\}\eta+28C_\phi \max\{\epsilon,\eta\}\eta(1-\epsilon\eta)^2\geq C_\phi\eta^2> (\partial_t-\Delta)(\phi-\log\epsilon).
    \]
    Moreover, it satisfies the nonnegative boundary condition on the free boundary
    \[
        \Phi=C_x|x-G(t)|^2\geq 0\quad\text{on}\quad \partial\{|u|<1\},
    \]
    and also on the boundary of the ball $\partial B_1$,
    \[
        \Phi\geq C_x/2\geq C_\phi \epsilon\eta\geq \phi-\log\epsilon\quad\text{on}\quad \partial B_1\cap\overline{\{|u|<1\}}.
    \]
    Therefore, we deduce that $\Phi$ is a parabolic supersolution to $\phi-\log \epsilon$ in $\{|u|<1\}$. The construction of the subsolution follows similarly, and by applying the standard parabolic maximum principle (see, e.g. \cite[Theorem 2.7]{lieberman1996second}), we conclude that (note that $u\circ G(t)=0$)
    \[
        |\phi(G(t))-\log \epsilon|\leq C_\tau \epsilon^2(1-u^2\circ G(t))=C_\phi \epsilon^2\eta\max\{\epsilon,\eta\},
    \]
    for all $t\in [0,T)$. By repeating the similar argument for all other starting points $x\in B_{5/6}\cap\{|u(\cdot,0)|<1\}$, we arrive at \eqref{eq: gradient improved estimate}. This completes the proof.
\end{proof}

\begin{remark}

    Note that the estimate \eqref{eq: gradient improved estimate} is equivalent to
    \[
        \|\sigma-\epsilon\|_{L^\infty(\{|u|<1\}\cap B_{3/4})}\leq C\epsilon^3\eta\max\{\epsilon,\eta\}
    \]
    and
    \[
        \||\nabla u|-\epsilon^{-1}\|_{L^\infty (\{|u|<1\}\cap B_{3/4})}\leq C\epsilon\eta\max\{\epsilon,\eta\}.
    \]
\end{remark}

\begin{proof}[Proof of Theorem \ref{thm: main theorem}]
    \eqref{eq: direct equation of v H phi} is precisely \eqref{eq: eq for v} with $f=0$. We show \eqref{eq: Linfty estimate of grad phi} and \eqref{eq: quantitave rate of convergence}.

    Recall the parabolic equation 
    \[
        \left\{
            \begin{alignedat}{2}
                (\partial_t-\Delta) (\phi-\log\epsilon)&=|\mathbf{A}|^2-(\partial_\nu\phi)^2\quad&&\text{in}\quad\{|u|<1\}\\
                |\phi-\log\epsilon|&\leq C\epsilon^2\eta\max\{\epsilon,\eta\}\quad&&\text{on}\quad \overline{\{|u|<1\}}\cap B_{3/4},\\
                \|\phi-\log\epsilon\|_{C^{1,\alpha}(\partial\{|u|<1\})}&\leq C\epsilon^2\eta\max\{\epsilon,\eta\}&&
            \end{alignedat}
        \right.
    \]
    where the interior $L^\infty$ bound is from \ref{eq: gradient improved estimate}, and the parabolic boundary $C^{1,\alpha}$ bound is from the choice of the initial condition and the fact that $\phi=-\log|\nabla u|=-\log\epsilon$ is constant on the free boundary $\partial\{|u(\cdot,t)|<1 \}$ for all $t\in [0,T)$.
    
    Note that the parabolic boundary $\partial\{|u|<1\}$ has a uniform $C^{1,1}$ bound: because we assumed that $|\mathbf{A}|\leq \eta$, the spatial boundary $\partial\{|u(\cdot,t)|<1\}$ has uniform $C^{1,1}$ bound for all $t\in [0,T)$. On the other hand, from the velocity assumption $\|v\|_{L^\infty(\{|u|<1\})}\leq \eta$, we have the time-wise $C^{1,1}$ bound of the parabolic boundary $\partial\{|u|<1\}$.
    
    Thus, we can apply the standard boundary parabolic estimates. With the fact that $|\partial_\nu\phi|=|v-H|\leq C(n)\eta$,
    \[
        ||\mathbf{A}|^2-(\partial_\nu\phi)^2|\leq C(n)\eta^2\quad\text{in}\quad\{|u|<1\},
    \]
    thus by parabolic gradient estimate with a general boundary, we conclude that 
    \[
        \|\nabla \phi\|_{L^\infty(\{|u|<1\}\cap B_{1/2})}\leq C(n)\epsilon\eta\max\{\epsilon,\eta\}.
    \]
    Also by the parabolic $C^{1,\alpha}$ estimate with a general boundary (e.g. \cite[Theorem 12.10]{lieberman1996second})
    \[
        [\nabla \phi]_{C_{x,t}^\alpha(B_{1/2}\cap\{|u|<1\})}\leq C\epsilon^{1-\alpha}\max\{\epsilon,\eta\}\eta+C\epsilon^{1-\alpha}\eta^2\leq C\epsilon^{1-\alpha}\max\{\epsilon,\eta\}\eta,
    \]
    for some $C=C(n,\alpha)$, where $[\cdot ]_{C^\alpha_{x,t}}$ denotes the parabolic $C^\alpha$ seminorm. 
    
    Finally, it follows that for any $t\in [0,T)$, that
    \begin{align*}
        \|\partial_\nu \phi\|_{C^\alpha(B_{1/2}\cap\{|u|<1\})}&\leq C\|\nu\|_{C^\alpha(B_{1/2}\cap\{|u|<1\})}\|\nabla\phi\|_{C^\alpha(B_{1/2}\cap\{|u|<1\})}\\
        &\leq C\epsilon^{1-\alpha}\max\{\epsilon,\eta\}\eta.
    \end{align*}
    Note that we have used $|\nabla \nu|^2=|\mathbf{B}|^2\leq |\mathbf{A}|^2+|\nabla\phi|^2\lesssim \eta^2$ in the second inequality. This completes the proof.
\end{proof}

\begin{proof}[Proof of Theorem \ref{thm: main theorem 2}]
    Since the initial condition $\|\nabla\phi_0\|_{L^\infty(\{|u_0|<1\})}\leq \epsilon^2\eta\max\{\epsilon,\eta\}<\eta$ is satisfied, we can take $T^*\in (0,T]$, the maximal time such that $|\nabla \phi|\leq \eta$ in $\{|u(\cdot,t)|<1:t\in[0,T^*)\}$. Then for $t\in [0,T^*)$, using \eqref{eq: direct equation of v H phi}, we can estimate
    \[
        |v|\leq |H|+|\partial_\nu\phi|\leq \sqrt{n-1}|\mathbf{A}|+|\nabla \phi|\leq C(n)\eta,\quad\text{in}\quad \{|u(\cdot,t)|<1\}.
    \]
    Then repeat the argument in the proof of Theorem \ref{thm: main theorem}, to obtain estimates
    \[
        \|\nabla \phi\|_{L^\infty(\{|u(\cdot,t)|<1:t\in[0,T^*)\})}\leq C^*(n,R)\epsilon\eta\max\{\epsilon,\eta\}
    \]
    for some $C^*=C^*(n,R)$, and
    \[
        \|\partial_\nu\phi\|_{C^\alpha(\{|u(\cdot,t)|<1:t\in [0,T^*)\})}\leq C(n,R,\alpha)\epsilon^{1-\alpha}\max\{\epsilon,\eta\}\eta.
    \]
    Note that $C^*$ is a dimensional constant (and also dependence on $R$), and if $\epsilon\eta<\frac{1}{C^*}$, then $\|\nabla \phi\|_{L^\infty(\{|u(\cdot,t)|<1:t\in[0,T^*)\}\cap B_{1/2})}<\eta$. Therefore, if $T^*<T$, this contradicts the maximality of $T^*$. This implies that $T^*=T$. This concludes the proof.
\end{proof}

\section*{Acknowledgements}

The first author thanks Prof. Joaquim Serra for suggesting the problem, and Prof. Shengwen Wang for the insightful talk. The first author also thanks Dr. Hardy Chan, for his careful guidance throughout the doctoral study. The first author has received funding from the Swiss National Science Foundation
under Grant PZ00P2\_202012/1. The second author acknowledges the support of the Grant-in-Aid for JSPS Fellows, Grant number 25KJ1245.

\medskip

\printbibliography

\end{document}